\def\Z{{\Bbb Z}}
\def\a{{\alpha}}
\def\R{{\mathbb R}}
\newtheorem{theorem}{Theorem}[section]
\newtheorem{lemma}[theorem]{Lemma}
\newtheorem{proposition}[theorem]{Proposition}
\theoremstyle{definition}
\newtheorem{example}[theorem]{Example}
\newtheorem{definition}[theorem]{Definition}
\theoremstyle{remark}
\newtheorem{remark}[theorem]{Remark}
\newcounter{fig}
\begin{document}

\title{Embedding compact surfaces into the 3-dimensional Euclidean space with maximum symmetry}
\author{Chao Wang, Shicheng Wang, Yimu Zhang, Bruno Zimmermann}
\date{Dedicated to Professor Boju Jiang on his 80th birthday}
\maketitle

\begin{abstract}
The symmetries of surfaces which can be embedded into the symmetries of the 3-dimensional Euclidean space  $\R^3$ are
easier to feel by human's intuition.

We give the maximum order of finite group actions on $(\R^3, \Sigma)$ among all possible embedded closed/bordered surfaces with given geometric/algebraic genus $>1$ in $\R^3$. We also identify the topological types of the bordered surfaces realizing the maximum order, and find simple representative embeddings for such surfaces.
\end{abstract}


\section{Introduction}
The maximum orders of finite group actions on surfaces have been studied for a long time, and a rather recent topic is to study the maximum orders of those finite group actions on closed surfaces which can extend over a given compact 3-manifold. Let $\Sigma_g$ denote the closed orientable surface of (geometric) genus $g$. For each compact surface $\Sigma$, let $\a(\Sigma)$ denote its {\it algebraic genus}, defined as the rank of the fundamental group $\pi_1(\Sigma)$. In the following, we discuss some sample results about these maximum order problems:

(1) Maximum orders of finite group actions on surfaces: (i) A classical result of Hurwitz states that the maximum order of orientation-preserving finite group actions on $\Sigma_g$ with $g>1$ is at most $84(g-1)$, \cite{Hu} in 1893. (ii) The maximum order of finite cyclic group actions on $\Sigma_g$ with $g>1$ is $4g+4$ for even $g$ and $4g+2$ for odd $g$, \cite{St} in 1935. (iii) The maximum order of finite group actions on bordered surfaces of algebraic genus $\a>1$ is at most $12(\a-1)$, \cite{Ma} in 1975. In (i) and (iii), to determine those maximum orders for concrete genera are still hard questions in general, and there are numerous interesting partial results.

(2) Maximum orders of finite group actions on surfaces which can extend over a given 3-manifold $M$: (i) The maximum order of finite group actions on $\Sigma_g$ with $g>1$ is at most $12(g-1)$ when $M$ is a handlebody and $\Sigma_g=\partial M$, \cite{Zi} in 1979. (ii) Much more recently the maximum order of extendable finite group actions on $\Sigma_g$ is determined when $M$ is the 3-sphere $S^3$,
for cyclic group, see \cite{WWZZ1} for orientation-preserving case and \cite{WZh} for general case;
for general finite groups, see \cite{WWZZ2} for orientation-preserving case and \cite{WWZ1} for general case. (iii) Some progress has been made when $M$ is the 3-torus $T^3$, see \cite{WWZ2} for cyclic case and \cite{BRWW} for general case. In (i) and (iii), to determine those maximum orders for concrete genera are still not solved.

Surfaces belong to the most familiar topological subjects mostly because they can be seen staying in the 3-dimensional Euclidean space $\R^3$ in various manners. The symmetries of surfaces which can be embedded into the symmetries of $\R^3$ are easier to feel by intuition. Hence it will be more natural to wonder the maximum orders of finite group actions on surfaces which extend over the 3-dimensional Euclidean space $\R^3$. In this paper, we will study this maximum order problem for all compact (closed/bordered) surfaces with given (geometric/algebraic) genera.

To state our results, we need some notions and definitions. We always assume that the manifolds, embeddings and group actions are smooth, and the group actions on $\R^3$ are faithful. Let $O(3)$ denote the isometry group of the unit sphere in the 3-dimensional Euclidean space $\R^3$, and let $SO(3)$ denote the orientation-preserving isometry group of the unit sphere in $\R^3$. It is known that any  finite group $G$ acting on $\R^3$ can be conjugated into $O(3)$, especially it can be conjugated into $SO(3)$ if the $G$ action is orientation-preserving, see  \cite{MY} and \cite{KS}.

\begin{definition}
Let $e:\Sigma\hookrightarrow \R^3$ be an embedding of a compact surface $\Sigma$ into $\R^3$. If a group $G$ acts on $\Sigma$ and $\R^3$ such that $h\circ e=e\circ h$ for each $h \in G$ and the $G$-action on $\R^3$ can be conjugated into $O(3)$, then we call such a group action on $\Sigma$ {\it extendable} over $\R^3$ with respect to $e$.
\end{definition}

For simplicity, we will say ``$G$ acts on the pair $(\mathbb{R}^3, \Sigma)$ " in the sense of this definition.

\begin{remark}
An orthogonal action on $\mathbb{R}^3$ fixes $0$ and extends orthogonally to $\mathbb{R}^4$ acting trivially on the new coordinate, so it fixes pointwise a line in $\mathbb{R}^4$ through $0$ and restricts to an orthogonal
action on $S^3  \subset \mathbb{R}^4$  with a fixed point. Vice versa, an orthogonal action on $S^3$ with a fixed point $P$ acts orthogonally on $\mathbb{R}^4$ , fixes $0$ and $P$ and pointwise the line in $\mathbb{R}^4$ through $0$
and $P$, and hence restricts to an orthogonal action on the $\mathbb{R}^3$ orthogonal to this line. So the actions on $(S^3, \Sigma)$ with at least one fixed point are the same as the actions on $(\mathbb{R}^3, \Sigma)$.
\end{remark}

Let $\Sigma_{g, b}$ denote the orientable compact surface with genus $g$ and $b$ boundary components, and for $g>0$ let $\Sigma^-_{g, b}$ denote the non-orientable compact surface with genus $g$ and $b$ boundary components. Note that $\Sigma_{g,0}$ is the same as $\Sigma_{g}$, and $\Sigma^-_{g, 0}$ is the connected sum of $g$ real projective planes. It is well known that each compact surface is either $\Sigma_{g, b}$ or $\Sigma^-_{g, b}$, the surfaces with $b=0$ give all closed surfaces, the surfaces with $b\neq 0$ give all compact bordered surfaces, and only $\Sigma^-_{g, 0}$ cannot be embedded into $\R^3$. Also note that $\a(\Sigma_{g, 0})=2g$.

\begin{definition}
For a fixed $g>1$, let $E_{g}$ be the maximum order of all extendable finite group actions on $\Sigma_g$ for all embeddings $\Sigma_g\hookrightarrow\R^3$; let $CE_{g}$ be the maximum order of all extendable cyclic group actions on $\Sigma_g$ for all embeddings $\Sigma_g\hookrightarrow\R^3$; if we require that the actions are orientation-preserving on $\R^3$ (i.e. the group action can be conjugated into $SO(3)$), then the maximum orders we get will be denoted by $E^o_{g}$ and $CE^o_{g}$ respectively.

For a fixed $\a>1$, let $EA_{\a}$ be the maximum order of all extendable finite group actions on $\Sigma$ for all embeddings $\Sigma\hookrightarrow\R^3$ among all bordered surfaces with $\a(\Sigma)=\a$; let $CEA_{\a}$ be the maximum order of all extendable cyclic group actions on $\Sigma$ for all embeddings $\Sigma\hookrightarrow\R^3$ among all bordered surfaces with $\a(\Sigma)=\a$; if we require that the actions are orientation-preserving on $\R^3$, then the maximum orders we get will be denoted by $EA^o_{\a}$ and $CEA^o_{\a}$ respectively.
\end{definition}

Note that in the above definition if $g\leq 1$ or $\a\leq 1$, then the maximum order will be infinite (consider sphere, torus, disk and annulus) and which is not interesting. Our first results are about the closed surfaces.

\begin{theorem}\label{CE}
For each $g>1$, (1) $CE^o_g$ is $g+1$; (2) ${CE}_g$ is $2g+2$ for even $g$ and $2g$ for odd $g$.
\end{theorem}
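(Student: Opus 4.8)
The plan is to reduce everything to orthogonal actions and then run Riemann--Hurwitz on the quotient orbifold, controlling the branch data by how the rotation axis meets the embedded surface; the one genuinely three-dimensional input will be an equivariant Euler characteristic computation that excludes the top value when $g$ is odd.

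First I treat the orientation-preserving quantity $CE^o_g$. Any finite cyclic subgroup of $SO(3)$ is the group $C_n$ of rotations through multiples of $2\pi/n$ about a fixed axis $L$. The fixed points of the generator on $\Sigma_g$ are exactly the points $\Sigma_g\cap L$; since $\Sigma_g$ separates $\R^3$ and $L$ runs from infinity to infinity inside the unbounded complementary region, $L$ meets $\Sigma_g$ in an even number $f$ of points (for $n\ge 3$ the tangent plane at such a point must be horizontal, so the intersections are transverse crossings). All branch points of $\Sigma_g\to\Sigma_g/C_n$ are these $f$ axis points, each of full order $n$, so Riemann--Hurwitz reads $2g-2=2n(h_0-1)+f(n-1)$ with $h_0$ the genus of the quotient surface. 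Maximizing $n$ subject to $f$ even and $h_0\ge 0$ gives $n\le g+1$, with equality forced by $h_0=0,\ f=4$. For the matching lower bound I will exhibit the boundary of a regular neighbourhood of the symmetric ``theta graph'' joining two poles on $L$ by $g+1$ rotation-permuted arcs: this is a genus-$g$ surface invariant under $C_{g+1}$, so $CE^o_g=g+1$.

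Next, for $CE_g$: an orientation-reversing element of $O(3)$ preserving $\Sigma_g$ sends the bounded inside region to itself, hence preserves the outward normal and reverses the orientation of $\Sigma_g$; so the rotation subgroup $H=G\cap SO(3)=C_{n/2}$ is coaxial, and the same axis-parity argument applied to $H$ gives $n/2\le g+1$, i.e. $n\le 2g+2$. For the lower bounds I will use two reflective constructions. When $g$ is even, $g+1$ is odd, so $C_{g+1}\times\langle\sigma\rangle\cong C_{2g+2}$ (with $\sigma$ the reflection in the horizontal plane) is cyclic and acts on the reflection-symmetric theta-graph surface, giving $CE_g\ge 2g+2$; when $g$ is odd, $\gcd(g,2)=1$, so $C_g\times\langle\sigma\rangle\cong C_{2g}$ is cyclic and acts on a sphere carrying $g$ equatorial handles permuted by the $C_g$-rotation and each preserved by $\sigma$, giving $CE_g\ge 2g$.

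It remains to prove $CE_g\le 2g$ for odd $g$, which is the heart of the argument; since $n$ is even it suffices to exclude $n=2g+2$. Suppose $G=C_{2g+2}$ acts. Equality in the bound forces $H=C_{g+1}$ with $h_0=0$ and $f=4$, so the axis meets $\Sigma_g$ in four points bounding two $H$-fixed arcs inside the bounded region $V$, while the centre of symmetry $0$ lies outside $V$; note $\chi(V)=\tfrac12\chi(\Sigma_g)=1-g$. If $G$ contained the horizontal reflection $\sigma$ through $0$, then $G=H\times\langle\sigma\rangle=C_{g+1}\times C_2$, which is not cyclic because $g+1$ is even -- a contradiction. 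Hence $G$ contains no reflection, so every orientation-reversing element of $G$ is a rotary reflection or $-\mathrm{id}$ and fixes only $0\notin V$; these act freely on $V$, and the non-free set of the $G$-action on $V$ is exactly the two arcs. Deleting equivariant tubular neighbourhoods of the two arcs gives a compact $V'$ with $\chi(V')=\chi(V)-2=-(g+1)$ carrying a free $G$-action, whence $\chi(V'/G)=-(g+1)/(2g+2)=-\tfrac12$, impossible since Euler characteristics are integers. Thus $n=2g+2$ is unrealizable for odd $g$ and $CE_g=2g$. The main obstacle is precisely isolating this half-integer contradiction: it requires first pinning down the extremal branch data ($h_0=0$, $f=4$, $0\notin V$) and the exact fixed-point structure of the orientation-reversing elements on the bounded region, after which the computation is forced.
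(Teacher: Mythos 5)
Your argument is correct, and its skeleton coincides with the paper's: the bound $CE^o_g\le g+1$ comes from Riemann--Hurwitz applied to the quotient of the coaxial rotation subgroup, using that the closed surface separates $\R^3$ to force an even number of transverse axis intersections (this is exactly Proposition \ref{pro:cyclic action}); the realizing surfaces are the same dipole-graph neighbourhoods as in Example \ref{general}; and $CE_g\le 2g+2$ follows from the index-two orientation-preserving subgroup as in Lemma \ref{index 2}. The one genuinely different step is the exclusion of $\Z_{2g+2}$ for odd $g$ (the paper's Proposition \ref{pro:noncyclic}). The paper passes to the quotient orbifold $V/\Z_{g+1}$, a ball with two singular arcs, applies Lemma \ref{fixed point} together with Lemma \ref{lem:cyclic} to produce a regular fixed point of the induced involution, and derives a contradiction from the stabilizer of its preimage. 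You stay upstairs: since $g$ is odd the unique involution $t^{g+1}$ of $G$ is a power of $t^2$, hence an orientation-preserving $\pi$-rotation, so $G$ contains no reflection, every nontrivial element has fixed set inside the axis $L$, and $G$ acts freely on $V$ with equivariant neighbourhoods of the two axis arcs removed; then $\chi(V'/G)=-(g+1)/(2g+2)=-\tfrac12$ is not an integer. This is the same Euler-characteristic parity obstruction (Lemma \ref{fixed point} is itself proved by multiplicativity of $\chi$ under free actions), but your version is self-contained and avoids orbifold quotients, at the cost of a little unstated bookkeeping: the claim that $0\notin V$ needs the observation that $t$ inverts $L$, so the four axis points are symmetric about $0$ and the middle complementary interval containing $0$ lies in the unbounded region --- although in fact your computation goes through whether or not $0\in V$, since the non-free locus is contained in the two arcs in either case.
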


\begin{theorem}\label{E}
For each $g>1$, (1) $E^o_g$ is given in the following table;
\begin{center}
  \begin{tabular}{|c|c|}
  \hline $g$ & $E^o_g$ \\
  \hline $3$ & $12$ \\
  \hline $5, 7$ & $24$  \\
  \hline $11, 19, 21$ & $60$ \\
  \hline others & $2g+2$ \\\hline
  \end{tabular}
\end{center}
(2) $E_g=2E^o_g$ when $g\neq 21$, and $E_{21}=88$.
\end{theorem}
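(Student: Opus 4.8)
The plan is to reduce to orthogonal groups and to analyse everything through the quotient orbifold $S^3/G$. By Remark 1.2 and the cited conjugation results, an extendable action is orthogonal, so I may take $G\subset O(3)$, and $G\subset SO(3)$ in the orientation--preserving case. Since $G$ fixes $0$ and $\infty$ while $\Sigma_g$ separates $S^3=\R^3\cup\{\infty\}$ into a bounded and an unbounded region, $G$ preserves each region; hence for $G\subset SO(3)$ the induced action on $\Sigma_g$ preserves orientation and $\bar\Sigma:=\Sigma_g/G$ is an orientable closed $2$--orbifold. For $G\subset SO(3)$ the quotient $S^3/G$ is the suspension of the spherical orbifold $S^2/G$, so its singular set is a graph $\Theta$ with two vertices $P_0,P_\infty$ (images of $0,\infty$, local group $G$) joined by one edge per cone point of $S^2/G$, labelled by the corresponding rotation order: the labels are $(n,2,2)$ for $D_n$, $(2,3,3)$ for $A_4$, $(2,3,4)$ for $S_4$ and $(2,3,5)$ for $A_5$. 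The surface descends to a separating suborbifold $\bar\Sigma\subset S^3/G$ whose cone points are exactly its transverse intersections with the edges of $\Theta$.

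First I would record three constraints. The orbifold covering gives $2-2g=|G|\,\chi^{\mathrm{orb}}(\bar\Sigma)$. Since every edge joins $P_0$ to $P_\infty$ and $\bar\Sigma$ separates, the number of intersection points on each edge has a common parity $\epsilon$ (with $\epsilon=0$ iff $P_0,P_\infty$ lie on the same side), so the counts of the various cone--point types are all congruent mod $2$. Finally the cover is classified by a homomorphism $\pi_1^{\mathrm{orb}}(\bar\Sigma)\to G$ sending each cone--point loop to a rotation of the prescribed order in the conjugacy class attached to its edge; connectedness of $\Sigma_g$ forces this map to be \emph{surjective}. I would then solve the resulting Diophantine problem for each $G$. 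The cyclic case is Theorem~\ref{CE}. For $D_n$, surjectivity forces an element outside $C_n$ and hence, by the product relation, a positive even number of order--$2$ cone points; feeding this into the Euler--characteristic relation shows the least genus carrying $D_n$ is $g=n-1$, so the order is at most $2g+2$, and the bound is attained. For the polyhedral groups the same bookkeeping, combined with the group theory of the relevant rotations (order--$3$ rotations generate all of $A_4$, but only the index--$2$ subgroup $A_4$ of $S_4$, while they generate the whole of $A_5$), pins down exactly the genera where $12,24,60$ beat $2g+2$: $A_4$ only at $g=3$ ($g=4$ is excluded by the parity constraint), $S_4$ only at $g=5,7$ (the arithmetically admissible $g=9$ has quotient $S^2(3,3,3,3)$ and is killed by surjectivity, four $3$--cycles generating only $A_4$), and $A_5$ only at $g=11,19,21$ (there $S^2(3,3,3,3)$ \emph{does} lift). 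Matching realisations come from explicit symmetric embeddings, e.g.\ boundaries of regular neighbourhoods of suitable $G$--invariant graphs of first Betti number $g$; this gives part~(1).

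For part~(2) the upper bound $E_g\le 2E^o_g$ is immediate, since $H:=G\cap SO(3)$ has index at most $2$ in $G$ and is itself an orientation--preserving extendable action, so $|G|\le 2|H|\le 2E^o_g$. For $g\ne 21$ I would realise $2E^o_g$ by enlarging the extremal orientation--preserving example by an orientation--reversing isometry: the $\Theta$--type configuration carries a reflection, yielding order $4(g+1)=4g+4$ in the ``others'' case, while the $A_4,S_4,A_5$ models for $g=3$; $5,7$; $11,19$ are invariant under the central symmetry $-I$ and so enlarge to $T_h,O_h,I_h$ of orders $24,48,120$. The exceptional case is $g=21$, where the $A_5$--action is forced to have quotient $S^2(3,3,3,3)$: here I would prove non--extension to $I_h=A_5\times\{\pm I\}$ by studying the involution of $S^3/A_5$ induced by $-I$ (it fixes $P_0,P_\infty$, reverses the equatorial $S^2(2,3,5)$, and fixes the order--$3$ edge pointwise) and showing the four order--$3$ intersection points cannot be placed compatibly with an $I_h$--quotient giving a connected surface, i.e.\ the monodromy is not $(-I)$--invariant. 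Granting this, any $O(3)$ group of order in $(88,120]$ acting on $\Sigma_{21}$ would have $G\cap SO(3)$ of order $>44$; by part~(1) the cyclic and dihedral orders are at most $22$ and $44$, so this forces $A_5$ and hence $G=I_h$, already excluded. Since $88$ is realised by enlarging the $D_{22}$ example (order $44=2g+2$) by an orientation--reversing involution, this yields $E_{21}=88$.

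The hard part will be twofold. First, one must upgrade the necessary conditions to \emph{embeddability}: parity, monodromy and Euler characteristic cut down the candidates, but the actual symmetric embeddings (neighbourhoods of invariant graphs, or equivariant surgeries) have to be exhibited to prove sufficiency. The sharper difficulty is the exceptional analysis at $g=21$: establishing that the order--$60$ icosahedral action on $\Sigma_{21}$ does not extend to the order--$120$ group $I_h$. This is subtle precisely because the clean suspension description of $S^3/G$ is no longer available once orientation--reversing elements (with mirror walls in $S^3/I_h$) enter, and it is exactly this phenomenon that makes $g=21$ the unique exception to $E_g=2E^o_g$.
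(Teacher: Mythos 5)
Your plan follows essentially the same route as the paper: reduce to finite subgroups of $SO(3)$/$O(3)$, classify the quotient $2$-orbifold via Riemann--Hurwitz together with the separation/parity constraint on intersections with the singular axes, use surjectivity of the monodromy (connectedness of the preimage) to kill the $S_4$ quotient $S^2(3,3,3,3)$ at $g=9$, and for part (2) combine the index-$2$ bound with an analysis of the reflection induced on $\R^3/A_5$ to exclude order $120$ at $g=21$. The two points you flag as hard --- exhibiting the genus-$21$ $A_5$-invariant realization and the $I_h$-exclusion --- are handled in the paper exactly as you sketch: by an $A_5$-invariant graph built from two concentric dodecahedra, and by showing the reflection plane (which contains the singular set) meets the quotient $S^2(3,3,3,3)$ in a single circle, forcing the image of its fundamental group in $A_5$ to be a $\Z_3$, contradicting connectedness.
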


\begin{remark}
The embedded surfaces realizing $CE^o_g$ and $CE_g$ can be unknotted as well as knotted (where, viewing $S^3$ as the
one-point compactification of $\R^3$, a surface is unknotted if it bounds  handlebodies on both sides). This is also true for $E^o_g$ with $g\neq 21$. For $E^o_{21}$ the surfaces must be knotted. On the other hand, the embedded surfaces realizing $E_g$ must be unknotted. At this point, it would be worth to compare with those results in \cite{WWZZ2} and \cite{WWZ1}.
\end{remark}

Note that when $b\neq 0$, $\a(\Sigma_{g, b})=2g-1+b$ and $\a(\Sigma^-_{g, b})=g-1+b$.
Therefore there are many bordered surfaces having algebraic genus $\alpha$. For the bordered surfaces, we have the following results.

\begin{theorem}\label{CEA}
For each $\a>1$,

(1) $CEA^o_\a$ is $\a+1$, and the surfaces realizing $CEA^o_\a$ are $\Sigma_{0,\a+1}$ and $\Sigma_{\a/2,1}$ when $\a$ is even, and are $\Sigma_{0,\a+1}$ and $\Sigma_{(\a-1)/2,2}$ when $\a$ is odd;

(2) $CEA_\a$ is $2\a+2$ for even $\a$ and $2\a$ for odd $\a$, and the surface realizing ${CEA}_\a$ is $\Sigma_{0,\a+1}$ in both cases.
\end{theorem}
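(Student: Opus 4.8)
The plan is to combine the orbifold Euler characteristic identity with the local rigidity of orthogonal cyclic actions near their fixed axis. First I would record that every bordered surface of algebraic genus $\a$ satisfies $\chi(\Sigma)=1-\a$, independently of orientability: for $\Sigma_{g,b}$ one has $\chi=2-2g-b=1-(2g-1+b)$, and for $\Sigma^-_{g,b}$ one has $\chi=2-g-b=1-(g-1+b)$. For a $\Z_n$-action I then use $\chi(\Sigma)=n\,\chi^{\mathrm{orb}}(\Sigma/\Z_n)$, so that $n\,(-\chi^{\mathrm{orb}})=\a-1$ is fixed and maximizing $n$ is the same as minimizing the positive number $-\chi^{\mathrm{orb}}$. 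For part~(1) the action is conjugate into $SO(3)$, hence is a rotation of $\R^3$ about an axis; every point of the axis is fixed by the whole group, so each cone point of $\mathcal{O}=\Sigma/\Z_n$ has order exactly $n$, and the cone points are precisely the images of $\Sigma\cap(\text{axis})$. Writing $c=|\Sigma\cap(\text{axis})|$ and letting $\mathcal{O}$ also denote the underlying surface-with-boundary, $-\chi^{\mathrm{orb}}=-\chi(\mathcal{O})+c(1-1/n)$. Since a surface with nonempty boundary has $\chi(\mathcal{O})\le 1$ with equality only for the disk, and $-\chi^{\mathrm{orb}}>0$ is forced by $\a>1$, the minimum is attained exactly when $\mathcal{O}$ is a disk and $c=2$, giving $-\chi^{\mathrm{orb}}=1-2/n$ and $n=\a+1$.

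The surface identification in part~(1) then comes from transversality. Near each of the two axis points the invariant smooth surface has a rotation-invariant tangent plane, so it is transverse to the axis and the rotation acts there as rotation by $\pm 2\pi/n$; hence the two local monodromies are $a_1,a_2\in\{1,-1\}\subset\Z_n$. The boundary $\partial\Sigma$ covers $\partial\mathcal{O}$ with monodromy $a_1+a_2$, so $\Sigma$ has $\gcd(a_1+a_2,n)$ boundary circles: opposite signs give $n$ circles, i.e. $\Sigma_{0,\a+1}$; equal signs give $\gcd(2,n)$ circles, i.e. $\Sigma_{\a/2,1}$ when $n=\a+1$ is odd and $\Sigma_{(\a-1)/2,2}$ when $n$ is even. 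Because $\mathcal{O}$ is an orientable disk its cover $\Sigma$ is orientable, so no further types occur. Both are realized by rotation-symmetric embeddings: removing a single rotation-orbit of $n$ round disks from a sphere centred on the axis yields the opposite-monodromy surface $\Sigma_{0,\a+1}$, while a rotation-symmetric surface whose two polar caps wind the same way yields the equal-monodromy surface.

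For part~(2) the orientation-reversing generator $g$ has $g^2\in SO(3)$, so $H=\Z_n\cap SO(3)=\langle g^2\rangle$ has index two and is a rotation of order $n/2$. Applying part~(1) to $H$ gives $n/2\le\a+1$, i.e. $n\le 2\a+2$. For even $\a$, $\a+1$ is odd, so $\Z_{\a+1}\times\Z_2\cong\Z_{2\a+2}$; placing one rotation-orbit of $\a+1$ round holes on the equator of a sphere centred on the axis and adjoining the reflection in the equatorial plane realizes $2\a+2$ on $\Sigma_{0,\a+1}$. For odd $\a$ this product is not cyclic, and I would rule out $2\a+2$ as follows. If it held, $H=\Z_{\a+1}$ realizes the part-(1) maximum; moreover $g$ swaps the two axis points and, being orientation-reversing, conjugates one polar monodromy to the negative of the other, forcing opposite monodromies and $\Sigma=\Sigma_{0,\a+1}$. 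Then the $\a+1$ boundary circles form a single $H$-orbit, so the generator $g^2$ of $H$ acts on them as an $(\a+1)$-cycle, and $g$ acts as a permutation whose square is this cycle. But $\a+1$ is even, an $(\a+1)$-cycle is an odd permutation, and no permutation squares to an odd permutation -- a contradiction. Hence $n\le 2\a$, and equality is realized on $\Sigma_{0,\a+1}$ by a flat embedding: realize $\Sigma_{0,\a+1}$ as a planar disk in a plane $P$ with its $\a$ inner holes forming a single $\Z_\a$-orbit, and take $G=\Z_\a\times\langle\tau\rangle$, where $\tau$ is the reflection in $P$. Since $\a$ is odd this group is cyclic of order $2\a$ and is faithful on $\R^3$ (only faithfulness on $\R^3$ is required), while preserving $\Sigma$.

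The hard part, and the step I expect to require the most care, is the odd-$\a$ upper bound: ruling out $2\a+2$ is not an Euler characteristic inequality but a compatibility condition between the $\pm1$ polar monodromies forced by part~(1) and the way the orientation-reversing generator must permute the boundary circles, which I reduce above to the statement that an odd permutation has no square root. Two supporting points also need attention: checking that the residues $a_1,a_2$ alone distinguish the two extremal surfaces of part~(1) (including the orientability bookkeeping that excludes non-orientable quotients at the extremum), and carrying out the analogous fixed-point and monodromy analysis in the odd-$\a$, order-$2\a$ case to confirm that $\Sigma_{0,\a+1}$ is the \emph{only} extremal surface there.
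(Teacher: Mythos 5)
Your part (1) is correct and takes a more direct route than the paper: you run the Riemann--Hurwitz/orbifold Euler characteristic count on the bordered surface itself, whereas the paper passes to the closed surface $\partial N(\Sigma)$ and quotes Theorem \ref{CE}(1) together with Proposition \ref{pro:cyclic action}; your local-monodromy bookkeeping at the two axis points is equivalent to the paper's analysis of the two ways the singular line can pass through the quotient disk. Your parity-of-a-square obstruction in part (2) is also a genuinely different (and rather elegant) mechanism, replacing the paper's Proposition \ref{pro:noncyclic}, which excludes $\Z_{2\a+2}$ by a fixed-point/Euler characteristic argument on the quotient $3$-orbifold.

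There is, however, a genuine gap in part (2) at the step ``$g$ \dots being orientation-reversing, conjugates one polar monodromy to the negative of the other.'' An orientation-reversing isometry of $\R^3$ preserving $\Sigma$ reverses the orientation of $\Sigma$ if and only if it preserves the coorientation (the two sides) of $\Sigma$; nothing in your argument excludes the other alternative, in which $g$ preserves the orientation of $\Sigma$, the two polar monodromies are equal, and $\Sigma$ is $\Sigma_{\a/2,1}$ or $\Sigma_{(\a-1)/2,2}$. For odd $\a$ this is repairable, since your parity argument applies verbatim to $\Sigma_{(\a-1)/2,2}$: its two boundary circles are interchanged by $g^2$ (the image of $\pi_1(\partial X)$ has index $2$ in $\Z_{\a+1}$), and a transposition is not the square of any permutation. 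But for even $\a$ the uniqueness assertion of the theorem --- that $\Sigma_{\a/2,1}$ does \emph{not} realize $2\a+2$ --- rests entirely on this unproved monodromy flip, and the parity trick is useless there (one boundary circle). The paper closes exactly this hole by showing that the involution induced on $X=\Sigma/\Z_{\a+1}$ has a regular fixed point (Lemma \ref{fixed point}, since $\chi(|X|)=1$), hence is a reflection meeting $X$ transversally; transversality forces it to preserve the coorientation of $X$ and therefore to reverse its orientation, which is what produces the opposite-monodromy configuration. Finally, the uniqueness of $\Sigma_{0,\a+1}$ at order $2\a$ for odd $\a$, which you explicitly defer, is a substantive further step (here $\Sigma/\Z_\a$ is an annulus with a single cone point, and one must show it lies in the reflection plane, forcing a planar, non-faithful picture); as written, your proposal establishes the orders in part (2) but not the full classification of the extremal surfaces.
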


\begin{theorem}\label{EA}
For each $\a>1$, (1) $EA^o_\a$ and the surfaces realizing $EA^o_\a$ are listed below;
\begin{center}
  \begin{tabular}{|c|c|c|}
  \hline $\a$ & $EA^o_\a$ & $\Sigma$\\
  \hline $3$ & $12$ & $\Sigma_{0,4}$, $\Sigma^-_{1,3}$\\
  \hline $5$ & $24$ & $\Sigma_{0,6}$, $\Sigma_{1,4}$   \\
  \hline $7$ & $24$ & $\Sigma_{0,8}$, $\Sigma^-_{4,4}$  \\
  \hline $11$ & $60$ & $\Sigma_{0,12}$, $\Sigma^-_{6,6}$\\
  \hline $19$ & $60$ & $\Sigma_{0,20}$, $\Sigma_{4,12}$, $\Sigma^-_{10,10}$, $\Sigma^-_{14,6}$\\
  \hline $21$ & $60$ & $\Sigma_{5,12}$\\
  \hline $29$ & $60$ & $\Sigma_{0,30}$, $\Sigma_{5,20}$, $\Sigma_{9,12}$, $\Sigma_{14,2}$\\
  \hline others, $\a$ even & $2\a+2$ &$\Sigma_{0,\a+1}$, $\Sigma_{\a/2,1}$\\
  \hline others, $\a$ odd & $2\a+2$ &$\Sigma_{0,\a+1}$, $\Sigma_{(\a-1)/2,2}$\\\hline
  \end{tabular}
\end{center}
(2) ${EA}_\a$ and the surfaces realizing ${EA}_\a$ are listed below.
\begin{center}
  \begin{tabular}{|c|c|c|}
  \hline $\a$ & ${EA}_\a$& $\Sigma$\\
  \hline $3$ & $24$ &$\Sigma_{0,4}$\\
  \hline $5$ & $48$&$\Sigma_{0,6}$\\
  \hline $7$ & $48$&$\Sigma_{0,8}$\\
  \hline $11$ & $120$& $\Sigma_{0,12}$\\
  \hline $19$ & $120$& $\Sigma_{0,20}$\\
  \hline others & $4\a+4$ &$\Sigma_{0,\a+1}$\\\hline
  \end{tabular}
\end{center}
\end{theorem}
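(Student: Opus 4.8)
The plan is to reduce the bordered problem to the closed one already settled in Theorem~\ref{E}. The starting observation is that every bordered surface $\Sigma$ with $\a(\Sigma)=\a$ has the same Euler characteristic $\chi(\Sigma)=1-\a$, regardless of orientability: indeed $\chi(\Sigma_{g,b})=2-2g-b$ with $\a=2g-1+b$, and $\chi(\Sigma^-_{g,b})=2-g-b$ with $\a=g-1+b$, both giving $\chi=1-\a$. This already explains why a single invariant $\a$ governs all the bordered surfaces at once.

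First I would prove the upper bounds $EA^o_\a\le E^o_\a$ and $EA_\a\le E_\a$ by a doubling argument. Given a finite $G$ acting on $(\R^3,\Sigma)$, I choose a $G$-invariant regular neighborhood $N$ of $\Sigma$ (equivariant tubular neighborhood theorem). Since $N$ is an $I$-bundle over $\Sigma$ (trivial if $\Sigma$ is $2$-sided, twisted if $1$-sided), its boundary $\partial N$ is a closed orientable surface with $\chi(\partial N)=2\chi(\Sigma)=2-2\a$, hence $\partial N=\Sigma_\a$. The group $G$ acts on $(\R^3,\Sigma_\a)$, and this action is faithful because a nontrivial element of $O(3)$ can fix pointwise at most a plane, whereas $\Sigma_\a$ is not planar. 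Thus $|G|\le E^o_\a$ when $G\subset SO(3)$ and $|G|\le E_\a$ in general. A comparison of the tables in Theorems~\ref{E} and~\ref{EA} shows that these bounds already equal the asserted maxima (using, e.g., $E^o_{29}=2\cdot 29+2=60$ and $E_{21}=88$), so the remaining task is purely one of realization and of identifying all realizing types.

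Next I would establish the lower bounds and produce the realizing surfaces by explicit equivariant constructions, using that a finite $G\subset SO(3)$ acts on the round sphere $S^2\subset\R^3$ with quotient one of $S^2(n,n)$, $S^2(2,2,n)$, $S^2(2,3,3)$, $S^2(2,3,4)$, $S^2(2,3,5)$. Generically the dihedral group $D_{\a+1}$ of order $2\a+2$ realizes $\Sigma_{0,\a+1}$ by deleting $\a+1$ round disks centred at an equatorial orbit on $S^2$, while invariant families of handles attached along the axes give the genus-carrying types $\Sigma_{\a/2,1}$ and $\Sigma_{(\a-1)/2,2}$. At the exceptional genera one uses the platonic groups: for $\a=11$, deleting disks around the $12$ icosahedral vertices gives $\Sigma_{0,12}$ with an $A_5$-action of order $60$, and analogous deletions/handle-attachments (together with inserted M\"obius bands to produce $1$-sided surfaces) realize the remaining orientable and non-orientable entries. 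Here the number of boundary circles equals the size of a $G$-orbit on the axes ($6,10,12,20$ for $A_5$), and the constraint $\chi=1-\a$ then fixes the genus. For the bounds $EA_\a$ one enlarges the group by the antipodal map or a reflection, doubling the order, and the extra orientation-reversing symmetry forces the realizing surface to be the planar $\Sigma_{0,\a+1}$ in every case.

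Finally I would show these are the \emph{only} realizing types. Fixing a group $G$ of order $E^o_\a$ (resp.\ $E_\a$), I would analyze the quotient $2$-orbifold $\Sigma/G$: its cone points arise exactly where $\Sigma$ meets the rotation axes of $G$, its mirror locus records points of $\Sigma$ whose stabilizer reverses the surface's coorientation, and $\chi^{\mathrm{orb}}(\Sigma/G)=(1-\a)/|G|$. Enumerating the finitely many orbifolds with this Euler characteristic that are compatible with the orbit data of $G$ on $S^2$, and translating each back through the branched-covering and orientability bookkeeping into a pair $(g,b)$ or $(g,b)^-$, yields the complete lists. The main obstacle is precisely this last step: one must verify that every combinatorially admissible orbifold is actually realized by an embedded $G$-invariant bordered surface in $\R^3$, and that no other type occurs. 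For the icosahedral and octahedral cases this requires a careful, somewhat lengthy analysis of how invariant surfaces can meet the singular axes, together with the knottedness considerations that single out $\Sigma_{5,12}$ as the unique realizing surface when $\a=21$.
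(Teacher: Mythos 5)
Your overall skeleton matches the paper's: the upper bound via the doubled surface $\partial N(\Sigma)\cong\Sigma_\a$ (this is exactly Proposition \ref{le}), the lower bound via equivariant constructions from the dipole graphs and platonic skeletons, and then an analysis of the quotient orbifold $\Sigma/G$ to pin down the topological types. But the part you defer as ``the main obstacle'' is in fact the entire content of the theorem, and the heuristic you offer in its place is wrong. The paper's identification step runs as follows: since $N(\Sigma)/G$ is a $3$-ball with two singular arcs of indices $r\le s$ (this comes out of the proof of Theorem \ref{E}), the quotient $X=\Sigma/G$ has underlying space a disk and $\pi_1(X)\cong\Z_r*\Z_s$, in one of two configurations (two interior cone points, or one cone point plus a mirror boundary arc). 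Connectivity of $\Sigma$ forces the induced map $i_*:\pi_1(X)\to\pi_1(\R^3/G)\cong G$ to be surjective (Lemma \ref{LemOrbConps}), so one obtains a generating pair $(x,y)$ of $G$ with $ord(x)=r$, $ord(y)=s$. The number of boundary circles is then $b=[G:\langle xy\rangle]$ in the first configuration and $b=[G:\langle x,y^{-1}xy\rangle]$ in the second, again by Lemma \ref{LemOrbConps}; orientability is settled by covering-space arguments (Lemmas \ref{LemOrbOri} and \ref{lem:nonori}); and the genus follows from $\a(\Sigma_{g,b})=2g-1+b$ or $\a(\Sigma^-_{g,b})=g-1+b$. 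The finiteness of the enumeration rests on classifying generating pairs of $D_{\a+1}$, $A_4$, $S_4$, $A_5$ with prescribed orders up to automorphism (Lemma \ref{FACTS}). None of this machinery appears in your proposal.

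Concretely, your claim that ``the number of boundary circles equals the size of a $G$-orbit on the axes'' cannot be repaired: for $\a=19$ the two orientable realizing surfaces have $b=20$ and $b=12$, arising from the two $Aut(A_5)$-classes of generating pairs $\{(12)(34),(12345)\}$ and $\{(13)(24),(12345)\}$ via $[A_5:\langle(135)\rangle]=20$ and $[A_5:\langle(14325)\rangle]=12$; for $\a=29$ one gets $b\in\{12,20,30\}$ from four classes of pairs of orders $(3,5)$. The boundary count depends on the product $xy$ of the chosen generating pair, not on orbit data of $G$ on $S^2$, and no Euler-characteristic bookkeeping alone can distinguish, say, $\Sigma_{0,20}$ from $\Sigma_{4,12}$ or decide which non-orientable types occur. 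Similarly, for part (2) you assert the orientation-reversing symmetry ``forces'' $\Sigma_{0,\a+1}$, but the actual argument requires showing that the mirror configuration is impossible (the reflection plane would have to contain all of $X$), that the two cone points of $X$ must lie on different singular half-lines of $\R^3/G^o$ (else the preimage is disconnected), and that the order of $xy$ then equals the index of the third singular line, which is what forces a punctured sphere. As written, your proposal establishes the numerical upper bounds and plausibly the lower bounds, but not the classification of realizing surfaces.
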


\begin{remark}
In the above theorems, all the group actions realizing the maximum orders must be faithful on the compact surfaces, except for the case of $CEA_\a$ with odd $\a$. In this case, the group action realizing $CEA_\a$ must be non-faithful on $\Sigma_{0,\a+1}$. If we require that the actions on surfaces are faithful, then $CEA_\a$ is $\a+1$ for odd $\a$ (Proposition \ref{pro:faithful action}). There can be various different surfaces realizing this maximum order.
\end{remark}

\begin{remark} Extending group actions on bordered surfaces seems to be addressed for the first time in the present note. A connection to knot theory is below: the group action on $(\R^3, \Sigma)$ is also a group action on $(\R^3, \Sigma, \partial\Sigma)$. If we view $\partial\Sigma$ as a link in $\R^3$ and $\Sigma$ as its Seifert surface, then Theorem \ref{EA} (1) provides many interesting examples of $(\R^3, \text{Seifert surface}, \text{link})$ with large symmetries. See Section \ref{Sec:bordered surface} for intuitive pictures.
\end{remark}

\begin{remark}
We note the paper \cite{CC} of the similar interest but quite different
content, which address when bordered surfaces $\Sigma$ can be embedded into
$\mathbb{R}^3$ so that those surface homeomorphisms permuting boundary components
of $\Sigma$ can extend over $\mathbb{R}^3$.
\end{remark}

Finally we give a brief description of the organization of the paper.

In Section \ref{Sec:preliminary} we list some facts about orbifolds and their coverings, as
well as automorphisms of small permutation groups, which will be used in
the proofs.

In Section \ref{Sec:closed surface} we prove Theorem \ref{CE} and Theorem \ref{E}. The upper bound
is obtained by applying the Riemann-Hurwitz formula and some preliminary,
but somewhat tricky, arguments on 2- and 3-dimensional orbifolds;
the equivariant Dehn¡¯s Lemma is not involved.

In Section \ref{Sec:bordered surface} we prove Theorem \ref{CEA} and Theorem \ref{EA}. The upper bounds in Theorem \ref{CE} and Theorem \ref{E} will be used to give the upper bounds in Theorem \ref{CEA} and Theorem \ref{EA}.
The most complicated part of this paper is to identify the topological types of those surfaces realizing the upper bounds and find simple representative embeddings for such bordered surfaces.

The examples in Sections 3 and 4 reaching the upper bound are visible
as expected, however several examples, including Example 3.2 (3) and those
in Figures 13 and 14, should be beyond the expectation of most people
(including some of the authors) before they were found.

In Section \ref{Sec:graph}, we give similar results for graphs in $\R^3$.


\section{Preliminaries}\label{Sec:preliminary}

In this section, we list some facts which will be used in the later proofs.

For orbifold theory, see \cite{Th}, \cite{Du} and \cite{BMP}. We give a brief description here. All of the {\it $n$-orbifolds} that we considered have the form $M/H$. Here $M$ is an $n$-manifold and $H$ is a finite group acting faithfully
on $M$. For each point $x\in M$, denote its stable subgroup by $St(x)$, its image in $M/H$ by $x'$. If the order $|St(x)|>1$, $x'$ is called a {\it singular point} with {\it index} $|St(x)|$, otherwise it is called a {\it regular point}. If we forget the singular set, then we get the topological {\it underlying space} $|M/H|$ of the orbifold $M/H$.

We make a convention that in the orbifold setting, for $X = \Sigma/G$, we
define $\partial X = \partial \Sigma/G$, the image of $\partial \Sigma$ under the group action (call it the real
boundary of $X$).

A simple picture we should keep in mind is the following: suppose that $G$ acts on $(\R^3, \Sigma)$ and let
$$\Gamma_G=\{x\in \R^3\mid\exists\, g\in G, g\neq \text{id}, s.t.\, gx=x\},$$
then $\Gamma_G/G$ is the singular set of the 3-orbifold $\R^3/G$, and $\Sigma/G$ is a 2-orbifold with singular set $\Sigma/G\cap \Gamma_G/G$.

The covering spaces and the fundamental group of an orbifold can also be defined. Moreover, there is an one-to-one correspondence between the orbifold covering spaces and the conjugacy classes of subgroups of the fundamental group, and regular covering spaces correspond to normal subgroups. In the following, automorphisms, covering spaces and fundamental groups always refer to the orbifold setting.

Note that an involution (peridical map of order 2) on $\R^3$ is conjugate to either a reflection (about a plane), or a $\pi$-rotation (about a line), or
an antipodal map (about a point).

\begin{theorem}[Riemann-Hurwitz formula]\label{Thm of RH formula}
Suppose that $\Sigma_g\rightarrow \Sigma_{g'}$ is a regular branched covering with transformation group $G$. Let $a_1, a_2, \cdots, a_k$ be the branched points in $\Sigma_{g'}$ having indices $q_1, q_2, \cdots, q_k$. Then
$$2-2g=|G|(2-2g'-\sum^k_{i=1}(1-\frac{1}{q_i})).$$
\end{theorem}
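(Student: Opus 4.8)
The plan is to prove the formula by comparing cell decompositions of the covering surface $\Sigma_g$ and the base $\Sigma_{g'}$, exploiting the fact that the only failure of the projection to be a genuine $|G|$-sheeted covering occurs over the branch points $a_1,\dots,a_k$. First I would fix a finite triangulation (or any CW-structure) of $\Sigma_{g'}$ arranged so that every branch point $a_i$ appears as a $0$-cell, and then pull this structure back through the covering map $p\colon\Sigma_g\rightarrow\Sigma_{g'}$ to obtain a $G$-invariant cell decomposition of $\Sigma_g$. Writing $V,E,F$ for the numbers of vertices, edges, and faces of the chosen decomposition of $\Sigma_{g'}$, we have $\chi(\Sigma_{g'})=V-E+F=2-2g'$.

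Next I would count cells upstairs using the regularity of the covering together with the orbit--stabilizer theorem. Over the complement of the branch locus the map $p$ is an honest $|G|$-sheeted covering, so each of the $E$ edges and $F$ faces of the base lifts to exactly $|G|$ cells in $\Sigma_g$, and each non-branch vertex lifts to $|G|$ vertices. Over a branch point $a_i$, by contrast, the fibre is the $G$-orbit of a point whose stabilizer has order $q_i$, so by orbit--stabilizer it consists of exactly $|G|/q_i$ points. Hence, if the $k$ branch points are among the $V$ vertices of the base, the lifted decomposition of $\Sigma_g$ has $|G|F$ faces, $|G|E$ edges, and $|G|(V-k)+\sum_{i=1}^k |G|/q_i$ vertices.

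Assembling these counts gives
$$\chi(\Sigma_g)=|G|(V-E+F)-|G|k+|G|\sum_{i=1}^k\frac{1}{q_i}=|G|\Bigl(\chi(\Sigma_{g'})-\sum_{i=1}^k\bigl(1-\tfrac{1}{q_i}\bigr)\Bigr),$$
and substituting $\chi(\Sigma_g)=2-2g$ and $\chi(\Sigma_{g'})=2-2g'$ yields the claimed identity. The quantity in parentheses is exactly the orbifold Euler characteristic of $\Sigma_{g'}=\Sigma_g/G$, so the formula amounts to the multiplicativity of the orbifold Euler characteristic under the $|G|$-fold orbifold covering, in keeping with the orbifold framework set up in Section~\ref{Sec:preliminary}.

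The main point requiring care is the local structure of the covering over a branch point, which is what justifies both that the fibre over $a_i$ has size $|G|/q_i$ and that the pulled-back cell structure is a legitimate CW-decomposition of $\Sigma_g$. Here one uses that a finite group acting smoothly on a surface can be linearized near a fixed point, so that the stabilizer of a preimage of $a_i$ is cyclic of order $q_i$ and acts as a rotation by $2\pi/q_i$, giving the standard local model $z\mapsto z^{q_i}$; in particular $q_i$ is precisely the branch index and the lift of a small disk about $a_i$ is again a disk, so that a $G$-invariant triangulation of $\Sigma_g$ meeting each preimage fibre in a single vertex exists. Granting this standard fact about finite surface actions, the cell-counting argument above is routine.
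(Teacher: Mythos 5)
Your proof is correct. The paper states the Riemann--Hurwitz formula as a classical fact and gives no proof of its own, so there is nothing to compare against; your argument --- triangulate the base with the branch points as vertices, pull back the cell structure, and count cells using that the fibre over $a_i$ has $|G|/q_i$ points by orbit--stabilizer while edges and faces lift $|G|$-to-one --- is the standard derivation, and your remark on the local model $z\mapsto z^{q_i}$ correctly handles the only delicate point, namely that the pulled-back structure is a genuine cell decomposition and that $q_i$ is the order of the (cyclic) stabilizer, matching the paper's definition of the index of a singular point.
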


We say that a group $G$ acts on the pair $(\mathbb{R}^3, \Sigma$) orientation-reversingly if there exists some $g \in G$ such that $g$ acts on $\mathbb{R}^3$ orientation-reversingly.

\begin{lemma}\label{index 2}
Suppose that $G$ acts on $(\mathbb{R}^3, \Sigma_g)$ orientation-reversingly and let $G^o=\{g\in G\mid g\,\,\text{acts orientation-preservingly on}\,\, (\mathbb{R}^3,\Sigma_g)\}$, then $G^o$ is an index $2$ subgroup in $G$.
\end{lemma}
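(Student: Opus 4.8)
The plan is to exhibit $G^o$ as the kernel of an explicit index-two character and then to check that the hypothesis forces this character to be onto.

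First I would use the standing assumption that the $G$-action on $\R^3$ is conjugate into $O(3)$. Since whether an element preserves or reverses orientation, as well as the way $\Sigma_g$ separates its complement, are topological properties invariant under conjugation, there is no loss in assuming $G\subset O(3)$ and that $G$ acts isometrically on $\R^3$ with $\Sigma_g$ embedded. On $O(3)$ the determinant gives a homomorphism $\varepsilon\colon G\to\{\pm1\}$, $\varepsilon(g)=\det g$, and by definition $g$ acts orientation-preservingly on $\R^3$ exactly when $\varepsilon(g)=1$. Thus $G^o=\ker\varepsilon$ is a subgroup of $G$, automatically of index $1$ or $2$. The orientation-reversing hypothesis supplies some $g_0\in G$ with $\varepsilon(g_0)=-1$, so $\varepsilon$ is surjective and $[G:G^o]=2$.

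It remains to confirm that this character really is the orientation character of the pair, i.e. that preserving the orientation of $\R^3$ is the same as preserving the orientation of the embedded surface $\Sigma_g$; this is the geometric content that justifies writing $G^o$ in terms of $(\R^3,\Sigma_g)$. Here I would use that $\Sigma_g$ is closed and orientable, hence two-sided, so it separates $S^3=\R^3\cup\{\infty\}$ into two regions. The key step is that no element of $G$ can interchange these two regions: every $g\in O(3)$ fixes the point $\infty$, so it preserves the region whose closure contains $\infty$, and therefore fixes both sides of $\Sigma_g$. Consequently $g$ carries a chosen transverse co-orientation $\nu$ of $\Sigma_g$ to itself, and from the relation $\mathrm{or}(\R^3)=\mathrm{or}(\Sigma_g)\wedge\nu$ one reads off that the surface-orientation character $\varepsilon_\Sigma$ coincides with $\varepsilon$. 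Hence $G^o$ admits both descriptions and the index-two conclusion is the same.

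The only genuinely nontrivial point, and the one I would be most careful about, is ruling out an element that swaps the two sides of $\Sigma_g$; once the fixed point $\infty$ (equivalently, preservation of boundedness by an isometry) is used, everything else is the formal fact that the kernel of a surjective homomorphism onto $\mathbb{Z}/2$ has index two. I would also double-check that the conjugation-into-$O(3)$ reduction is compatible with the orientation-reversing hypothesis, which it is since orientation type is preserved under conjugation by any homeomorphism.
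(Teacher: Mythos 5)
Your argument is correct: the paper states this lemma without proof, treating it as immediate, and the intended justification is exactly your core step — the orientation character $\varepsilon\colon G\to\{\pm1\}$ is a homomorphism with kernel $G^o$, surjective by the orientation-reversing hypothesis, so $[G:G^o]=2$. Your additional verification that orientation-preservation on $\R^3$ agrees with orientation-preservation on $\Sigma_g$ (via the fact that an orthogonal map preserves the unbounded complementary region and hence the co-orientation) is a sound and careful supplement, though strictly speaking the paper's convention defines the orientation behaviour of the pair through the action on $\R^3$ alone, so it is not needed for the statement as written.
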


Since every fixed point free involution on compact manifold $X$ give a quotient manifold $X/\Z_2$, with  Euler characteristic $\chi(X/\Z_2)=\frac{1}{2}\chi(X)$, we have

\begin{lemma}\label{fixed point}
There is  no fixed point free involution on  a compact manifold $X$ with odd Euler characteristic $\chi(X)$.
\end{lemma}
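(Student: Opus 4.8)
The plan is to argue the contrapositive, exploiting the multiplicativity of the Euler characteristic under finite covering maps, exactly as the sentence preceding the statement already indicates. Suppose, toward a contradiction, that $\tau\colon X\to X$ is a fixed-point-free involution. Then $\tau$ generates a group $\langle\tau\rangle\cong\Z_2$ acting freely on the compact manifold $X$. The first step is to check that the quotient is well-behaved: since $\Z_2$ is finite and acts freely on the Hausdorff manifold $X$, the action is automatically properly discontinuous, so the quotient $X/\Z_2$ is again a compact manifold (a manifold with boundary if $X$ has boundary, since a free involution carries $\partial X$ to $\partial X$), and the projection $p\colon X\to X/\Z_2$ is a genuine $2$-fold covering map.

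The second step is to invoke the standard fact that the Euler characteristic is multiplicative under a $d$-fold covering of compact spaces. Applied to $p$, this yields $\chi(X)=2\,\chi(X/\Z_2)$, equivalently $\chi(X/\Z_2)=\tfrac12\chi(X)$. Finally, since $X/\Z_2$ is itself a compact manifold, its Euler characteristic is an integer, so $\tfrac12\chi(X)\in\Z$ and hence $\chi(X)$ is even. Contrapositively, if $\chi(X)$ is odd then no fixed-point-free involution $\tau$ can exist, which is precisely the assertion.

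There is essentially no hard step here; the lemma is a short consequence of covering-space theory, and the only points needing (routine) care are the two facts used above: that freeness of a finite group action on a manifold guarantees both a manifold quotient and a covering projection, and that $\chi$ scales by the covering degree. If any obstacle is to be flagged, it is merely keeping the hypotheses tight — the argument relies on $X$ being a \emph{compact} manifold, so that $\chi(X)$ and $\chi(X/\Z_2)$ are finite integers; for noncompact or non-manifold $X$ the conclusion need not hold.
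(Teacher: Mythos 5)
Your argument is correct and is essentially identical to the paper's own justification, which is the one-sentence observation preceding the lemma that a free involution yields a quotient manifold with $\chi(X/\Z_2)=\frac{1}{2}\chi(X)$, forcing $\chi(X)$ to be even. The extra care you take (freeness plus finiteness gives a genuine double cover, and compactness guarantees integrality of both Euler characteristics) only makes explicit what the paper leaves implicit.
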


\begin{lemma}\label{lem:cyclic}
Suppose that the cyclic group $\Z_{2n}\subset O(3)$ acts on $\mathbb{R}^3$ orientation-reversingly, then its induced $\Z_2$-action on $|\mathbb{R}^3/\Z_n|\cong \R^3$ conjugates to a reflection or an antipodal map, whose fixed point set intersects the singular line of $\mathbb{R}^3/\Z_n$ transversely at one point.
\end{lemma}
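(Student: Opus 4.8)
The plan is to put the generator of $\Z_{2n}$ in a normal form, make the quotient explicit in cylindrical coordinates, and read the induced involution off directly.

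First I would choose a generator $g$ of $\Z_{2n}$. Orientation-parity is a homomorphism $\Z_{2n}\to\{\pm1\}$ which is onto because the action is orientation-reversing, so its kernel --- the orientation-preserving subgroup --- is the unique index-$2$ subgroup $\langle g^2\rangle\cong\Z_n\subset SO(3)$, and $g$ itself reverses orientation. A finite orientation-reversing element of $O(3)$ is a rotary reflection, so after an orthogonal change of coordinates
\[
g=\begin{pmatrix}\cos\theta&-\sin\theta&0\\ \sin\theta&\cos\theta&0\\ 0&0&-1\end{pmatrix},
\]
and hence $g^2$ is the rotation by $2\theta$ about the $z$-axis. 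Thus $\Z_n=\langle g^2\rangle$ is the rotation group about the $z$-axis $L$ by multiples of $2\pi/n$, the quotient $|\R^3/\Z_n|$ is homeomorphic to $\R^3$, and its singular line is the image of $L$.

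Next I would make this quotient explicit. In cylindrical coordinates $(\rho,\phi,z)$ the rotation group acts by $\phi\mapsto\phi+2\pi/n$, so $(\rho,\phi,z)\mapsto(\rho,\,n\phi\bmod 2\pi,\,z)$ descends to a homeomorphism from $|\R^3/\Z_n|$ onto $\R^3$; writing $\Phi=n\phi$ for the new angular coordinate, the singular line becomes the $z$-axis. Since $g$ acts by $\phi\mapsto\phi+\theta$, $z\mapsto-z$, the induced $\Z_2$-action (well defined because $g^2\in\Z_n$ acts trivially on the quotient) is
\[
(\rho,\Phi,z)\longmapsto(\rho,\ \Phi+n\theta,\ -z),
\]
which is linear in the coordinates $(\rho\cos\Phi,\rho\sin\Phi,z)$. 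Away from $L$ the projection is an orientation-preserving local homeomorphism while $g$ reverses orientation, so this involution reverses orientation; by the trichotomy for involutions of $\R^3$ recalled above it is therefore a reflection or an antipodal map, not a $\pi$-rotation.

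Finally I would pin down which of the two occurs, via $n\theta\bmod 2\pi$. Matching the order $2n$ of $g$ shows $n\theta\equiv0$ or $\pi\pmod{2\pi}$: in the former case the induced map is $(x,y,z)\mapsto(x,y,-z)$, a reflection whose fixed plane $z=0$ meets the singular $z$-axis transversely in the single point $0$; in the latter it is $(x,y,z)\mapsto(-x,-y,-z)$, the antipodal map, whose only fixed point $0$ lies on the singular line. In both cases the fixed-point set meets the singular line in exactly one point. The main obstacle is the interface with the orbifold singularity: the quotient is only topologically $\R^3$ (the cone transverse to $L$ has cone angle $2\pi/n$), so $\phi\mapsto n\phi$ is merely a homeomorphism, and one must check that it carries the induced involution to the stated linear model across the singular line and interpret ``transverse intersection at one point'' in the weak sense that also covers the antipodal case, where the fixed set degenerates to the single cone point on $L$.
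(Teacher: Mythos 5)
The paper states Lemma \ref{lem:cyclic} without proof, treating it as a standard consequence of the normal form for finite-order elements of $O(3)$; so there is no argument in the text to compare against. Your proof is correct and complete, and it is exactly the computation the authors are implicitly relying on: the orientation homomorphism forces the orientation-preserving subgroup to be the unique index-$2$ subgroup $\langle g^2\rangle\cong\Z_n$, the generator $g$ is a rotary reflection $\mathrm{diag}(R_\theta,-1)$ with $\theta=\pi k/n$, $\gcd(k,n)=1$, and the unwrapping homeomorphism $(\rho,\phi,z)\mapsto(\rho,n\phi,z)$ conjugates the induced involution to $\mathrm{diag}(R_{\pi k},-1)$, i.e.\ to the reflection $z\mapsto -z$ or the antipodal map, whose fixed set meets the image of the axis exactly at the origin. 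Two small remarks: the separate orientation-reversal argument via the local homeomorphism off $L$ is not needed, since $n\theta\equiv 0$ or $\pi \pmod{2\pi}$ already follows from $\theta=\pi k/n$ (or simply from the induced map being an involution), and the determinant of $\mathrm{diag}(R_{n\theta},-1)$ is $-1$ in any case; and you are right that for the antipodal map ``transverse intersection at one point'' should be read in the degenerate sense that the fixed set is the single cone point on the singular line, which is how the lemma is used later in the paper.
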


\begin{lemma}\label{LemOrbOri}
Suppose that $G$ acts on a compact surface $\Sigma$ such that each singular point in the orbifold $X=\Sigma/G$ is isolated and the underlying space $|X|$ is orientable, then $\Sigma$ is orientable.
\end{lemma}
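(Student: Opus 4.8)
The plan is to reduce the statement to two elementary facts: a covering space of an orientable manifold is orientable, and the orientability of a surface is unchanged by deleting or filling in a finite set of points. To set things up, write $p:\Sigma\to X$ for the quotient map and let $S=\{x\in\Sigma:|St(x)|>1\}$ be the set of points with nontrivial stabilizer. By the definition of the orbifold structure, $p(S)$ is exactly the singular set $\mathrm{Sing}(X)\subset|X|$. By hypothesis $\mathrm{Sing}(X)$ consists of isolated points of the compact space $|X|$, so it is finite; since $p$ is at most $|G|$-to-one, $S=p^{-1}(\mathrm{Sing}(X))$ is a finite, $G$-invariant subset of $\Sigma$. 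Put $\Sigma_0=\Sigma\setminus S$ and $X_0=|X|\setminus\mathrm{Sing}(X)$, so that $\Sigma_0$ is $\Sigma$ with finitely many points removed and $X_0$ is an open subsurface of the orientable surface $|X|$.

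Next I would exploit the free locus. As $S$ is $G$-invariant, $G$ acts on $\Sigma_0$, and by construction this action is free; since $G$ is finite it follows that $p|_{\Sigma_0}:\Sigma_0\to\Sigma_0/G=X_0$ is a regular covering map. Being an open subsurface of $|X|$, the space $X_0$ is orientable, and a covering map is in particular a local homeomorphism, so pulling back an orientation of $X_0$ along $p|_{\Sigma_0}$ equips $\Sigma_0$ with an orientation. (Connectivity is never used, so the conclusion is unaffected should $\Sigma_0$ be disconnected.)

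Finally I would fill the punctures back in. Each point of $S$ has a coordinate neighborhood that is a disk, or a half-disk if the point lies on $\partial\Sigma$; deleting the center leaves a connected, orientable piece, and the orientation that $\Sigma_0$ carries there is necessarily one of the two orientations of the whole disk, hence extends across the center. Carrying this out at the finitely many points of $S$ assembles a global orientation of $\Sigma$, proving that $\Sigma$ is orientable.

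The argument has no serious geometric obstacle; the points needing care are all bookkeeping. One must check that $p$ really does restrict to a covering over $X_0$, which rests on the facts that removing every point with nontrivial stabilizer renders the $G$-action free and that $(\Sigma\setminus S)/G=|X|\setminus\mathrm{Sing}(X)$ under the paper's convention for $X=\Sigma/G$. One must also treat singular points lying on $\partial\Sigma$ in the filling-in step, as indicated above. Conceptually, the hypothesis that the singular points are isolated is used exactly to rule out one-dimensional (reflector) singularities: this is what keeps $S$ finite and makes $p|_{\Sigma_0}$ an honest covering, and it is where the assumption would fail if, say, some element of $G$ acted on $\Sigma$ as a reflection with one-dimensional fixed set.
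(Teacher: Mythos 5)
Your proof is correct and follows essentially the same route as the paper's one-line argument: both lift an orientation of $|X|$ to $\Sigma$ through the quotient map, the only cosmetic difference being that you delete the finitely many singular points so that $p$ becomes an honest covering, pull back the orientation there, and then extend it across the punctures, whereas the paper lifts the compatible local orientation system directly. Your version supplies the bookkeeping (finiteness of $S$, freeness off $S$, the removable-singularity step at cone points and at boundary points) that the paper leaves implicit; there is no gap.
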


\begin{proof}
A compatible local orientation system of $|X|$ can be lifted to a compatible local orientation system of $\Sigma$. Hence it gives an orientation of $\Sigma$.
\end{proof}

\begin{lemma}\label{lem:nonori}
Suppose that $G$ acts on an orientable compact surface $\Sigma$ such that the orbifold $X=\Sigma/G$ contains non-isolated singular points or the underlying space $|X|$ is non-orientable, then $G$ has an index $2$ subgroup.
\end{lemma}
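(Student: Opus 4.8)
The plan is to reduce the statement to producing an orientation-reversing element of $G$, and then to exhibit the desired index $2$ subgroup as the orientation-preserving kernel. Since $\Sigma$ is connected and orientable, fix one of its two orientations; every homeomorphism of $\Sigma$ either preserves or reverses it, and this yields a homomorphism $w\colon G\to\Z_2=\{\pm 1\}$ with $w(g)=+1$ exactly when $g$ preserves orientation. If $w$ is surjective then $\ker w$ has index $2$ and we are done. Hence it suffices to prove that under either hypothesis of the lemma the homomorphism $w$ is nontrivial, and I would argue this by contraposition: assuming $G$ acts orientation-preservingly on $\Sigma$, I will show that $X=\Sigma/G$ has only isolated singular points and orientable underlying space, contradicting the hypothesis.

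So assume $w$ is trivial. First I analyze the local structure at each point $x\in\Sigma$. The stabilizer $St(x)$ is a finite group acting on a disk (if $x$ is interior) or half-disk (if $x\in\partial\Sigma$) neighborhood, preserving orientation. By the standard linearization of a finite-order diffeomorphism fixing $x$ (Cartan's averaging trick), the action is conjugate to its linear part. An orientation-preserving finite subgroup of $GL_2(\R)$ is a cyclic group of rotations, so an interior stabilizer produces an isolated cone point, while at a boundary point the linear part must in addition preserve the half-plane, which forces it to be trivial. Consequently every singular point of $X$ is an isolated interior cone point, and in particular $X$ has no non-isolated (reflector) singular points.

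Next I treat the underlying space globally. Away from the finitely many singular images the orbit map $\Sigma\to |X|$ is a covering, and since every deck transformation preserves orientation, the chosen orientation of $\Sigma$ descends to a consistent orientation of $|X|$ minus its singular points. Because the singular points are isolated, orientability of the punctured surface is equivalent to orientability of $|X|$, so $|X|$ is orientable. This contradicts both alternatives in the hypothesis (non-isolated singular points, respectively non-orientable $|X|$), showing that $w$ cannot be trivial; thus $\ker w$ is the required index $2$ subgroup.

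The routine points to verify are the two local models and the puncture argument, none of which is deep; the one place deserving care is the boundary case, where one must check that an orientation-preserving finite-order map fixing a boundary point and preserving the boundary has trivial linear part (its restriction to the boundary line has real eigenvalue $\pm1$, and the determinant condition together with finite order rule out all but the identity), so that no singular point can occur on $\partial\Sigma$. The main conceptual obstacle is simply to make precise the claim that an orientation-preserving action yields an orientable quotient orbifold with isolated singularities, which is the converse direction complementing Lemma \ref{LemOrbOri}; once the local pictures are pinned down, the global orientation descent and the resulting contradiction are immediate.
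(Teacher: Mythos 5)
Your proposal is correct and follows essentially the same route as the paper: the paper's proof simply asserts that some element of $G$ must reverse the orientation of $\Sigma$ and then takes the orientation-preserving elements as the index $2$ subgroup. You have merely filled in the justification of that assertion (via the local linearization at interior and boundary points and the descent of orientation to the punctured quotient), which the paper leaves implicit.
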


\begin{proof}
There must be an element of $G$ reversing the orientation of $\Sigma$. Then the orientation-preserving elements of $G$ form an index $2$ subgroup.
\end{proof}

\begin{lemma}\label{LemOrbConps}
Suppose that $G$ acts on $\R^3$ and $X$ is a suborbifold of $\R^3/G$ such that $|X|$ is connected. Let $i: X\hookrightarrow\R^3/G$ be the inclusion map, then the preimage of $X$ in $\R^3$ has $[\pi_1(\R^3/G):i_*(\pi_1(X))]$ connected components.
\end{lemma}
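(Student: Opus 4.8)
The plan is to prove the counting formula for connected components via the standard dictionary between orbifold covering spaces and subgroups of the orbifold fundamental group, which the excerpt has already set up. The key fact to invoke is the one-to-one correspondence between orbifold covering spaces of $\R^3/G$ and conjugacy classes of subgroups of $\pi_1(\R^3/G)$, together with the fact that $\R^3 \to \R^3/G$ is itself a regular orbifold covering. First I would observe that since $G$ acts on $\R^3$ (a simply connected manifold) and the action is properly discontinuous with $\R^3$ as the orbifold universal cover, we have $\pi_1(\R^3/G) \cong G$, with the covering projection $p:\R^3 \to \R^3/G$ being the regular covering associated to the trivial subgroup.

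Next I would restrict attention to the suborbifold $X$. Because $|X|$ is connected and $i:X \hookrightarrow \R^3/G$ is the inclusion, the induced homomorphism $i_*:\pi_1(X)\to\pi_1(\R^3/G)=G$ has a well-defined image $H=i_*(\pi_1(X))$, defined up to conjugacy by the choice of basepoint. The preimage $p^{-1}(X)$ is precisely the pullback of the covering $p$ along $i$; equivalently, it is the restriction of the $G$-action's quotient map to the saturated set lying over $X$. The central point is then to identify the connected components of this pullback with the cosets $G/H$, or rather with the orbit set through which the components are indexed. The standard principle is that a connected covering corresponding to $H$ pulled back becomes a disjoint union of connected coverings indexed by the double cosets or, in this regular situation, by the index $[G:H]$; since $p$ is a regular cover with deck group $G$, the number of connected components of the induced covering over the connected base $X$ equals the index $[\pi_1(\R^3/G):i_*(\pi_1(X))] = [G:H]$.

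Concretely, I would argue as follows. Fix a basepoint and lift it; the fiber of $p$ over the basepoint of $X$ is a $G$-torsor, and two points of the fiber lie in the same component of $p^{-1}(X)$ if and only if they are connected by a lift of a loop in $X$, i.e. if and only if they differ by an element of $i_*(\pi_1(X))=H$ acting as a deck transformation. Hence the set of components is in bijection with the $H$-orbits on the $G$-torsor fiber, which is the coset space $H\backslash G$, of cardinality $[G:H]$. This is the orbifold analogue of the elementary covering-space statement that the pullback of the universal cover (or any regular cover) along a map from a connected space splits into components counted by the index of the image of $\pi_1$.

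The main obstacle, and the only place requiring genuine care rather than routine diagram-chasing, is verifying that the orbifold covering theory applies cleanly to the pullback $p^{-1}(X)$ when $X$ may contain singular points and $p$ is branched along the singular locus $\Gamma_G/G$. One must check that restricting the orbifold covering $p$ over the suborbifold $X$ again yields a legitimate orbifold covering of $X$ (respecting the local index data along the singular strata), so that the path-lifting and monodromy arguments used above are valid in the orbifold category. I would handle this by appealing to the orbifold covering correspondence quoted in the preliminaries, noting that the pullback of an orbifold cover along a suborbifold inclusion is again an orbifold cover with the induced singular structure, after which the component count reduces to the group-theoretic index computation as above. Once this local compatibility is granted, the identification of components with $H\backslash G$ is immediate and completes the proof.
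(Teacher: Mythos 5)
Your argument is correct and proves the lemma, but it is organized differently from the paper's proof. You work directly with the universal (orbifold) cover $p:\R^3\to\R^3/G$: you identify the fiber over a regular basepoint of $X$ with a $G$-torsor and show that two fiber points lie in the same component of $p^{-1}(X)$ exactly when they differ by a deck transformation in $H=i_*(\pi_1(X))$, so the components are counted by $[G:H]$. The paper instead factors through the intermediate covering $\R^3/H$ corresponding to $H$: it lifts the inclusion $i$ to $\tilde{i}:X\to\R^3/H$, notes that there are $[G:H]$ lifts with pairwise disjoint images, and then invokes an external result (Lemma 2.10 of \cite{WWZZ2}, that a suborbifold with surjective $i_*$ has connected preimage) to conclude that each lift contributes exactly one component upstairs. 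The two routes are equivalent in content; yours is more self-contained, since it does not rely on the cited lemma, while the paper's delegates the delicate point --- path lifting and monodromy across the branch locus --- to the general orbifold covering correspondence rather than re-proving it on the fiber. You correctly flag that this branching issue is the only nontrivial verification in your approach; to close it fully you should note that the basepoint can be chosen in the regular part of $X$ (so that the fiber really is a full $G$-torsor), which is possible in all the situations where the lemma is applied, and that any path in $p^{-1}(X)$ between fiber points can be pushed off the singular set so that its projection defines an honest element of $\pi_1^{orb}(X)$ whose image in $G$ is the required deck transformation. With that remark included, your proof is complete.
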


\begin{proof}
Consider the covering space of $\R^3/G$ corresponding to the subgroup $H=i_*(\pi_1(X))$. It is $\R^3/H$. Then the inclusion map $i$ can be lifted to a map $\tilde{i}: X\rightarrow\R^3/H$. Since $i$ is an embedding, $\tilde{i}$ is also an embedding.
$$\xymatrix{
           & \mathbb{R}^3 \ar[1,0] &\\
           & \mathbb{R}^3/H\ar[1,0]&\\
     X\ar[0,1]^i\ar[ur]^{\tilde{i}}&\mathbb{R}^3/G&
}$$
Note that the lift $\tilde{i}$ has totally $[G:H]$ different choices, and different choices correspond to disjoint images in $\mathbb{R}^3/H$. For each given $\tilde{i}$, the induced homomorphism $\tilde{i}_*: \pi_1(X)\rightarrow\pi_1(\R^3/H)$ is surjective. Then by Lemma 2.10 in \cite{WWZZ2}, the preimage of $\tilde{i}(X)$ in $\R^3$ is connected. Hence the preimage of $X$ in $\R^3$ has $[G:H]=[\pi_1(\R^3/G):i_*(\pi_1(X))]$ connected components.
\end{proof}

Let $S_n$ denote the permutation group on the numbers $\{1, 2, 3,\cdots,n\}$, and let $A_n$ denote its alternating subgroup. Let $Aut(G)$ denote the automorphism group of a group $G$. For an element $x\in S_n$, let $ord(x)$ denote its order. The following algebraic facts will be used to identify the compact surfaces realizing the maximum orders.

\begin{lemma}\label{FACTS}
(1) If $\{x, y\}$ generates $A_4$ with $ord(x)=2$, $ord(y)=3$, then there exists $\sigma\in Aut(A_4)$ such that $\{\sigma(x),\sigma(y)\}=\{(12)(34), (123)\}$.

(2) If $\{x, y\}$ generates $S_4$ with $ord(x)=2$, $ord(y)=3$, then there exists $\sigma\in Aut(S_4)$ such that $\{\sigma(x),\sigma(y)\}=\{(12), (134)\}$.

(3) If $\{x, y\}$ generates $S_4$ with $ord(x)=2$, $ord(y)=4$, then there exists $\sigma\in Aut(S_4)$ such that $\{\sigma(x),\sigma(y)\}=\{(12), (1234)\}$.

(4) If $\{x, y\}$ generates $A_5$ with $ord(x)=2$, $ord(y)=3$, then there exists $\sigma\in Aut(A_5)$ such that $\{\sigma(x),\sigma(y)\}=\{(12)(34), (135)\}$.

(5) If $\{x, y\}$ generates $A_5$ with $ord(x)=2$, $ord(y)=5$, then there exists $\sigma\in Aut(A_5)$ such that one of the following two cases holds:\\ \centerline{$\{\sigma(x),\sigma(y)\}=\{(12)(34), (12345)\}$, $\{\sigma(x),\sigma(y)\}=\{(13)(24), (12345)\}$.}

(6) If $\{x, y\}$ generates $A_5$ with $ord(x)=3$, $ord(y)=3$, then there exists $\sigma\in Aut(A_5)$ such that $\{\sigma(x),\sigma(y)\}=\{(123), (145)\}$.

(7) If $\{x, y\}$ generates $A_5$ with $ord(x)=3$, $ord(y)=5$, then there exists $\sigma\in Aut(A_5)$ such that one of the following four cases holds:\\
\centerline{$\{\sigma(x),\sigma(y)\}=\{(123), (12345)\}$,
$\{\sigma(x),\sigma(y)\}=\{(132), (12345)\}$,}\\
\centerline{$\{\sigma(x),\sigma(y)\}=\{(124), (12345)\}$,
$\{\sigma(x),\sigma(y)\}=\{(142), (12345)\}$.}
\end{lemma}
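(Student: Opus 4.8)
The plan is to reduce the whole lemma to a finite orbit computation by exploiting that, for each of the three groups involved, the automorphism group is realized by conjugation inside a symmetric group. Concretely, $Aut(A_4)\cong Aut(S_4)\cong S_4$ and $Aut(A_5)\cong S_5$, where in every case an automorphism is conjugation by an element of the ambient symmetric group ($S_4$ for parts (1)--(3), and $S_5$ for parts (4)--(7)); this holds because $A_4\lhd S_4$ and $A_5\lhd S_5$ with trivial centralizers, while $S_4$ has only inner automorphisms. Consequently two generating pairs $\{x,y\}$ and $\{x',y'\}$ are related by some $\sigma\in Aut(G)$ exactly when they are simultaneously conjugate by a single permutation of the ambient $S_n$. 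So the lemma becomes the assertion that, under simultaneous conjugation in $S_n$, the generating pairs with the prescribed orders fall into precisely the listed orbits, with the stated representatives. I would open the proof by recording these facts about $Aut$, together with the conjugacy classes (cycle types) and the centralizers to be used.

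The uniform mechanism is the following. Fix the more rigid of the two generators and normalize it to a standard cycle, using that conjugation in $S_n$ is transitive on each cycle type. In the cases involving an element of order $5$ (parts (5) and (7)) I would normalize $y$ to $(12345)$; in the remaining cases I would normalize the element of order $2$ or $3$. Once one generator is pinned to a representative $x_0$, the residual conjugating freedom is exactly its centralizer $C_{S_n}(x_0)$, and the problem reduces to listing the $C_{S_n}(x_0)$-orbits on those elements $y$ of the required order for which $\langle x_0,y\rangle$ is all of $G$. Pairs generating only a proper subgroup are discarded using the (small, well-known) subgroup lattices of $A_4$, $S_4$, $A_5$; each surviving orbit contributes one line of the conclusion, from which I read off the named representative.

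To see why most parts give a single pair while (5) and (7) give several, consider the order-$5$ cases. With $y=(12345)$ one has $C_{S_5}(y)=\langle(12345)\rangle\cong\Z_5$, and the only subgroups of $A_5$ containing a $5$-cycle are $\Z_5$, the dihedral group $D_5$ of order $10$, and $A_5$ itself. In part (5) the fifteen double transpositions split under the $\Z_5$-action into three orbits of size $5$; exactly one of them consists of the reflections lying in $D_5$ and hence fails to generate, while the other two generate $A_5$, giving the representatives $(12)(34)$ and $(13)(24)$. In part (7) the twenty $3$-cycles split into four orbits of size $5$, and since $D_5$ contains no element of order $3$, each such $3$-cycle together with $(12345)$ generates $A_5$; this yields the four listed cases $(123),(132),(124),(142)$. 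The single-orbit parts (1)--(4) and (6) are handled the same way: e.g. for (6) I would fix $x=(123)$, note $C_{S_5}(x)=\langle(123)\rangle\times\langle(45)\rangle$ of order $6$, observe that two $3$-cycles generate $A_5$ precisely when their supports cover all five points, and check that the six admissible $y$ form a single orbit with representative $(145)$.

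The main obstacle is purely the bookkeeping: for each part one must correctly count the centralizer orbits on candidate second generators and, crucially, decide whether two superficially different generating pairs are actually conjugate in $S_n$ or genuinely inequivalent. The delicate points are precisely the multi-case parts (5) and (7), where one must separate the orbit landing inside $D_5$ from the orbits generating $A_5$ and confirm that the surviving orbits are pairwise non-conjugate, which is exactly what forces two representatives in (5) and four in (7). Everything else is a finite verification carried out class by class, requiring no input beyond the $Aut$-structure, conjugacy by cycle type, and the small subgroup lattices.
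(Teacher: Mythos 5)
Your proposal is correct and matches the paper's own argument: the paper likewise observes that everything reduces to an elementary enumeration and carries out exactly your mechanism for case (5) — normalize $y$ to $(12345)$ by an automorphism (conjugation in $S_5$), reduce $x$ to the three double transpositions supported on $\{1,2,3,4\}$, and discard $(14)(23)$ because it generates only the dihedral group of order $10$ with $(12345)$. Your write-up is merely more systematic about the centralizer-orbit bookkeeping that the paper leaves implicit for the other six parts.
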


\begin{proof}
Since $A_4$, $S_4$ and $A_5$ have small orders, all these facts can be checked by an elementary enumeration. We prove (5) as an example.

Note that $x$ must have the form $(ab)(cd)$ with $a, b, c, d \in \{1,2,3,4,5\}$ and $y$ must be a rotation of order $5$, so up to an automorphism we can assume that $y$ is $(12345)$ and $\{a,b,c,d\}=\{1,2,3,4\}$. Then there are three cases: $x$ is $(12)(34)$ or $(13)(24)$ or $(14)(23)$. However, $(14)(23)$ and $(12345)$ do not generate $A_5$, since they generate an order $10$ subgroup of $A_5$ which is a dihedral group. So we have the statement as in the lemma.
\end{proof}


\section{Closed surfaces in $\mathbb{R}^3$}\label{Sec:closed surface}

In this section, we first construct some extendable actions which will be used to realize $CE^o_g$, $CE_g$, $E^o_g$ and $E_g$. The symmetries are those people can feel in their daily life. Then we give the proofs of Theorem \ref{CE} and Theorem \ref{E}.

In the following examples, we will give a finite graph $\Gamma\subset\R^3$ and a group action on $(\R^3, \Gamma)$. The action will be orthogonal and keep $\Gamma$ invariant as a set. Then there exists a regular neighborhood of $\Gamma$, denoted by $N(\Gamma)$, such that: (1) $\partial N(\Gamma)$ is a smoothly embedded closed surface in $\R^3$; (2) the action keeps $\partial N(\Gamma)$ invariant; (3) the genus of $\partial N(\Gamma)$ is the same as the genus of $\Gamma$, defined as $g(\Gamma)=1-\chi(\Gamma)$, where $\chi(\Gamma)$ is the Euler characteristic of $\Gamma$. Then we get extendable actions on $\partial N(\Gamma)$.

\begin{example}[General case]\label{general}
(1) For each $g>1$, let $\Gamma^1_g\subset\mathbb{R}^3$ be a graph with $2$ vertices and $g+1$ edges as the left picture of Figure \ref{fig:general action} (for $g=2$). It has genus $g$. Then note that:

\begin{figure}[h]
\centerline{\scalebox{0.8}{\includegraphics{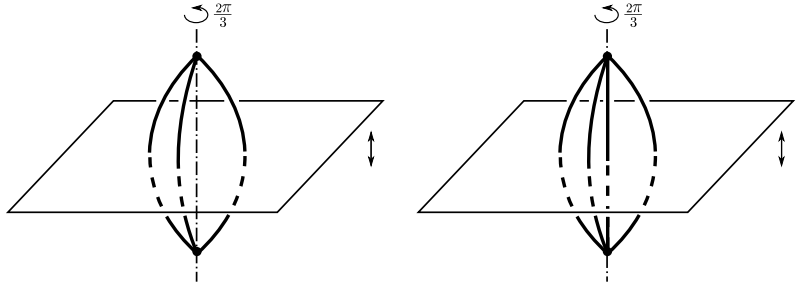}}}
\caption{Symmetric dipole graphs}\label{fig:general action}
\end{figure}
\noindent(i) there is a $2\pi/(g+1)$-rotation $\tau_{g+1}$ which generates a cyclic group of order $g+1$ acting on $(\R^3,\Gamma^1_g)$;\\
(ii) there is an orientation-preserving dihedral group $D_{g+1}$ action on $(\mathbb{R}^3,\Gamma^1_g)$, where $D_{g+1}$ is generated by $\tau_{g+1}$ and a $\pi$-rotation $\rho$ around the axis, which intersects an edge of $\Gamma^1_g$ and the rotation axis of $\tau_{g+1}$ in the plane;\\
(iii) when $g$ is even, the composition of the reflection $r$ about the plane and the rotation $\tau_{g+1}$ generates an order $2g+2$ cyclic group action on $(\mathbb{R}^3,\Gamma^1_g)$;\\
(iv) $\tau_{g+1}$, $\rho$ and $r$ generate a finite group of order $4g+4$ acting on $(\mathbb{R}^3,\Gamma^1_g)$.

(2) For odd $g$, let $\Gamma^2_g\subset\mathbb{R}^3$ be a graph with $2$ vertices and $g+1$ edges as the right picture of Figure \ref{fig:general action} (for $g=3$). It has genus $g$. Then the composition of the reflection $r$ about the plane and a $2\pi/g$-rotation generates an order $2g$ cyclic group action on $(\mathbb{R}^3,\Gamma^2_g)$.
\end{example}

\begin{example}[Special case]\label{special}
Let $T$, $C$, $O$, $D$, $I$ be the regular tetrahedron, cube, octahedron, dodecahedron, icosahedron. Their $1$-skeletons $T^{(1)}$, $C^{(1)}$, $O^{(1)}$, $D^{(1)}$, $I^{(1)}$ are graphs in $\R^3$ with genus $3$, $5$, $7$, $11$, $19$ respectively. See Figure \ref{fig:regular polyhedra}.
\begin{figure}[h]
\centerline{\scalebox{0.7}{\includegraphics{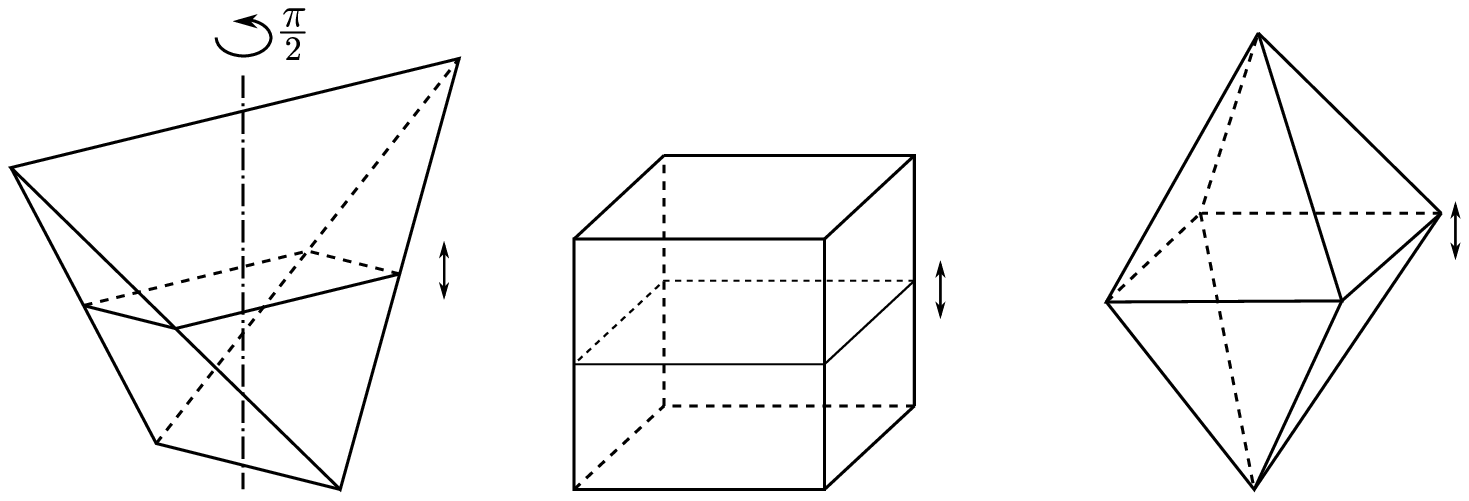}}}

\centerline{\scalebox{0.7}{\includegraphics{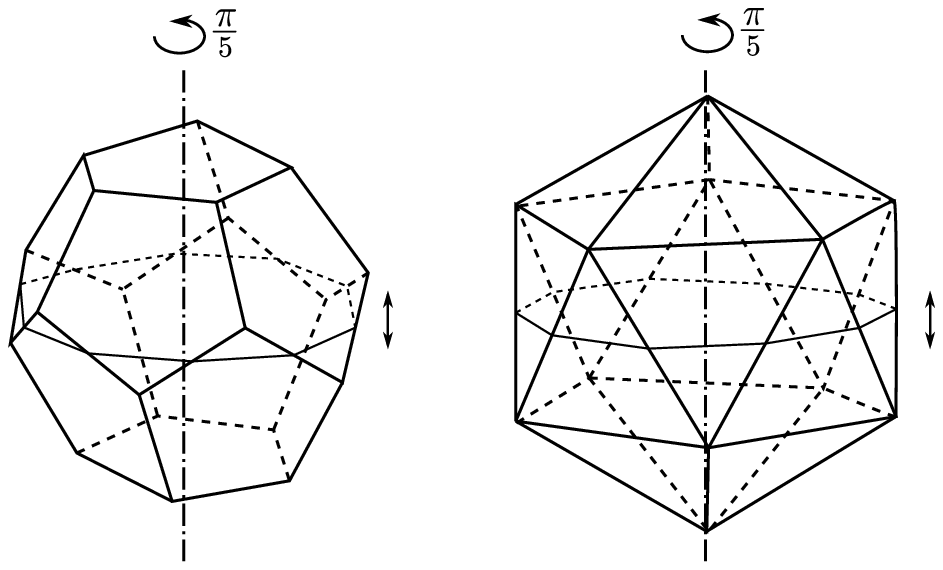}}}
\caption{Regular polyhedra}\label{fig:regular polyhedra}
\end{figure}

(1) The orientation-preserving isometry groups of the regular polyhedra keeps the graphs invariant. The group is $A_4$ of order $12$ for $T$, is $S_4$ of order $24$ for $C$ and $O$, and is $A_5$ of order $60$ for $D$ and $I$.

(2) The isometry groups of the regular polyhedra keeps the graphs invariant. These groups are obtained by adding orientation-reversing elements into the groups in (1), and have orders $24$, $48$, $48$, $120$, $120$ respectively. For $T$, the adding element can be the composition of a reflection and a $\pi/2$-rotation; for $C$ and $O$, the adding element can be a reflection; for $D$ and $I$, the adding element can be the composition of a reflection and a $\pi/5$-rotation. Actually, all these groups are generated by reflections.

(3) Consider an enlarged regular dodecahedron $D'$ with the same center as $D$. Let $v$ be a vertex of $D$ and let $v'$ be the vertex of $D'$ corresponding to $v$. Let $w$ be a vertex of $D'$ adjacent to $v'$. We can choose an arc connecting $v$ and $w$ such that its images under the orientation-preserving isometry group of $D$ only meet at vertices of $D$ and $D'$. Then the union of these image arcs is a connected graph with 40 vertices and 60 edges, therefore has genus $21$, and the orientation-preserving isometry group of $D$ keeps it invariant.
\end{example}

\begin{proposition}\label{pro:cyclic action}
Suppose that the cyclic group $\Z_n$ acts on $(\mathbb{R}^3,\Sigma_g)$ with $g>1$ orientation-preservingly, then one of the following holds:\\
(i) $n=g+1$, and $\Sigma_g/\Z_n$ is a sphere with $4$ singular points of index $n$;\\
(ii) $n=g$, and $\Sigma_g/\Z_n$ is a torus with $2$ singular points of index $n$;\\
(iii) $n=g-1$, and $\Sigma_g/\Z_n$ is a closed surface of genus $2$;\\
(iv) $n\leq g/2+1$.
\end{proposition}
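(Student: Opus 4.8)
The plan is to exploit the rigid structure of orientation-preserving cyclic actions on $\R^3$. Since the $\Z_n$-action on $(\R^3,\Sigma_g)$ is orientation-preserving it conjugates into $SO(3)$, and a cyclic subgroup of $SO(3)$ is generated by a rotation of angle $2\pi/n$ about a single axis $\ell$, with every nontrivial power again a rotation about the same $\ell$. Hence the fixed-point set of each nontrivial element is exactly $\ell$, the action is free on $\R^3\setminus\ell$, and a point of $\Sigma_g$ has nontrivial stabilizer precisely when it lies on $\ell$, in which case its stabilizer is all of $\Z_n$. So the singular points of the orbifold $X=\Sigma_g/\Z_n$ are exactly the images of the $k:=\#(\ell\cap\Sigma_g)$ intersection points, and each has index exactly $n$. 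First I would check that the action is orientation-preserving on $\Sigma_g$: the rotation carries the bounded complementary region of $\Sigma_g$ to itself, hence preserves the normal co-orientation, and as it preserves the orientation of $\R^3$ it must preserve that of $\Sigma_g$. Consequently $|X|$ is a closed orientable surface, say of genus $g'$, carrying $k$ cone points of index $n$, and $\Sigma_g\to|X|=\Sigma_{g'}$ is a regular branched covering with group $\Z_n$.

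The crucial step is to show that $k$ is even. For $n\geq 3$ I would first argue that $\ell$ meets $\Sigma_g$ transversely: at $p\in\ell\cap\Sigma_g$ the tangent plane $T_p\Sigma_g$ is a $2$-dimensional subspace invariant under the rotation of angle $2\pi/n$, and since $2\pi/n\neq\pi$ the only such invariant plane is the one orthogonal to $\ell$, so $\ell\pitchfork\Sigma_g$ at $p$ and the intersection points are isolated. By the Jordan--Brouwer separation theorem $\Sigma_g$ bounds a compact region of $\R^3$, and travelling along $\ell$ from one infinite end to the other each transverse crossing toggles inside and outside; since both ends lie in the unbounded region, $k$ must be even. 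When $n=2$ there is nothing to do, as $n=2\leq g/2+1$ for all $g\geq 2$, so the action already falls into case (iv).

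It then remains to run the Riemann--Hurwitz arithmetic. Applying Theorem \ref{Thm of RH formula} to $\Sigma_g\to\Sigma_{g'}$ with $k$ branch points all of index $n$ gives $2g-2=n(2g'-2)+k(n-1)$, which I would rewrite as $n=1+2(g-g')/(2g'+k-2)$. Splitting on $g'$ and using that $k$ is even: for $g'=0$ one has $k\geq 4$, with $k=4$ forcing $n=g+1$ (case (i)) and $k\geq 6$ giving $n\leq g/2+1$; for $g'=1$ one has $k\geq 2$, with $k=2$ forcing $n=g$ (case (ii)) and $k\geq 4$ giving $n\leq (g+1)/2$; for $g'\geq 2$ the free case $k=0$ gives $n=(g-1)/(g'-1)$, which equals $g-1$ when $g'=2$ (case (iii)) and is at most $(g-1)/2$ otherwise, while $k\geq 2$ forces $n\leq g/2$. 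In every case one lands in (i)--(iv).

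The main obstacle is the evenness of $k$: without it a spherical quotient with $k=3$ would spuriously allow $n=2g+1$, collapsing the whole trichotomy-plus-bound. That parity rests on the transversality of $\ell$ with $\Sigma_g$, which is exactly where the hypothesis $n\geq 3$ (equivalently $2\pi/n\neq\pi$) enters through the classification of rotation-invariant planes; the case $n=2$ is disposed of directly. Everything else is the mechanical case analysis of the Riemann--Hurwitz identity.
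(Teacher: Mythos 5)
Your proposal is correct and follows essentially the same route as the paper: realize the action as a rotation about an axis, show the number of cone points of $X=\Sigma_g/\Z_n$ (each of index $n$) is even via a separation argument, and then run the Riemann--Hurwitz case analysis, whose arithmetic coincides with the paper's. The only cosmetic difference is that you establish the parity upstairs (transversality of the axis with $\Sigma_g$, handling $n=2$ separately, plus Jordan--Brouwer), whereas the paper observes downstairs that $X$ must separate $\R^3/\Z_n$ and hence meets the singular line an even number of times.
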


\begin{proof}
Since the $\Z_n$-action can be conjugated into $SO(3)$, it is a rotation of order $n$ around a line. Then the 3-orbifold $\mathcal{O}=\mathbb{R}^3/\Z_n$ has underlying space $\mathbb{R}^3$ and singular set a line with index $n$. The 2-orbifold $X=\Sigma_g/\Z_n$, with underlying space a closed surface, must be separating in $\mathcal{O}$. Hence $X$ has even number of singular points. Suppose that $X$ has $2k'$ singular points and $|X|$ has genus $g'$. Then by the Riemann-Hurwitz formula we have
$$2-2g=n(2-2g'-2k'(1-\frac{1}{n}))=n(2-2g'-2k')+2k'.$$
Since $g>1$, $2-2g'-2k'<0$. Then $g'+k'-1\geq 1$, and
$$n=\frac{g+k'-1}{g'+k'-1}=\frac{g}{g'+k'-1}+\frac{k'-1}{g'+k'-1}.$$

If $g'+k'-1=1$, then $g'+k'=2$ and $n=g+k'-1$. When $k'=2$, $n=g+1$ and $g'=0$; when $k'=1$, $n=g$ and $g'=1$; when $k'=0$, $n=g-1$ and $g'=2$. If $g'+k'-1\geq 2$, then $n\leq g/2+1$.
\end{proof}

\begin{proposition}\label{pro:noncyclic}
When $g>1$ is odd, $\Z_{2g+2}$ and $\Z_{2g-2}$ can not act on $(\mathbb{R}^3,\Sigma_g)$ orientation-reversingly.
\end{proposition}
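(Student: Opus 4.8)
The plan is to assume such an action exists and to extract a numerical contradiction from the three\nobreakdash-dimensional quotient, since (as will become clear) the surface quotient $\Sigma_g/\Z_{2m}$ alone carries no obstruction. Write $2m$ for the order, so $m=g+1$ or $m=g-1$. By Lemma \ref{index 2} the orientation-preserving subgroup $G^o$ has index $2$, hence $G^o=\Z_m$; being orientation-preserving and cyclic it conjugates into $SO(3)$, i.e. it is the rotation group about an axis $L$. Feeding $n=m$ into Proposition \ref{pro:cyclic action}, for $m=g+1$ only case (i) survives (as $g+1>g/2+1$), so $\Sigma_g/\Z_{g+1}$ is a sphere with exactly four branch points and $L$ meets $\Sigma_g$ transversely in $4$ points; for $m=g-1$ with $g\ge 5$ only case (iii) survives, so $\Sigma_g/\Z_{g-1}$ is a closed genus-$2$ surface and $L$ misses $\Sigma_g$. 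In both situations set $c=\#(L\cap\Sigma_g)\in\{4,0\}$.

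Next I would normalize the generator. A finite-order orientation-reversing element of $O(3)$ is a rotary reflection, so take $t$ to be the rotation by $\pi/m$ about $L$ followed by the reflection in the plane through the centre $O$ orthogonal to $L$. The role of the hypothesis ``$g$ odd'' is precisely that $m=g\pm1$ is then even: every odd power $t^{2j+1}$ is again a nontrivial rotary reflection and fixes only $O$, whereas the involution $t^{m}$ is an honest rotation about $L$ (not a reflection or an antipodal map, which is what would occur for even $g$ and would restore integrality below). Thus the orientation-reversing elements of $\Z_{2m}$ all fix the single point $O$, while the nontrivial orientation-preserving ones fix $L$; compare Lemma \ref{lem:cyclic}. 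The decisive geometric step is to locate $O$: the $c$ points of $L\cap\Sigma_g$ are interchanged by $t$ via $z\mapsto -z$, hence are symmetric about $O$ with none equal to $O$, and since both ends of $L$ lie in the unbounded complementary region, reading off the sides along $L$ shows $O$ lies there too. Writing $B$ for the closure of the bounded side, a compact $3$-manifold with $\partial B=\Sigma_g$, we get $O\notin B$, so every orientation-reversing element acts freely on $B$.

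Now I would run the Euler-characteristic count for the quotient space, using the standard identity $\chi(B/\Z_{2m})=\frac{1}{2m}\sum_{k}\chi(\mathrm{Fix}(t^k)|_B)$, whose left-hand side is an integer. The contributions are: the identity gives $\chi(B)=\tfrac12\chi(\Sigma_g)=1-g$; each of the $m-1$ nontrivial rotations fixes $L\cap B$, a disjoint union of $c/2$ arcs, contributing $c/2$; and each of the $m$ orientation-reversing elements contributes $0$ by freeness. Hence
\[
\chi(B/\Z_{2m})=\frac{(1-g)+(m-1)\,c/2}{2m},
\]
which equals $\tfrac12$ when $(m,c)=(g+1,4)$ and $-\tfrac12$ when $(m,c)=(g-1,0)$. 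Neither value is an integer, the desired contradiction.

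The hard part is genuinely three-dimensional and has two facets. First, the obstruction is invisible on $\Sigma_g$: the analogous count on the surface yields an integer ($1$ in the first case, $-1$ in the second), so the argument must pass to the bounded region $B$, which forces the careful placement of $O$ in the unbounded region and the verification that $L$ meets $\Sigma_g$ transversely in the stated number of points. Second, the clean dichotomy $c\in\{4,0\}$ depends on Proposition \ref{pro:cyclic action} pinning the quotient down; this pinning fails for the smallest odd genus in the $\Z_{2g-2}$ case, where $G^o=\Z_{g-1}$ may branch over extra points (so $c$ need not be $0$) and the half-integer can then turn integral, so that residual configuration must be ruled out by a separate, finer argument rather than by the Euler-characteristic count alone.
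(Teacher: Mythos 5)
Your argument is correct and complete for the $\Z_{2g+2}$ half (all odd $g>1$) and for the $\Z_{2g-2}$ half when $g\geq 5$: the normalization of the generator as a rotary reflection, the transversality of $L\cap\Sigma_g$, the parity argument placing the centre $O$ in the unbounded region, and the count $\chi(B/\Z_{2m})=\frac{1}{2m}\sum_k\chi(\mathrm{Fix}(t^k)\cap B)$ all check out, and the values $\pm\frac12$ do give the contradiction. This is a genuinely different route from the paper's: there one quotients by $\Z_m$ first, uses Lemma \ref{lem:cyclic} to see that the induced involution on the bounded $3$-suborbifold has no singular fixed point, invokes Lemma \ref{fixed point} (odd Euler characteristic forces a fixed point) to produce a regular fixed point, and then gets a contradiction because the resulting order-$2$ stabilizer upstairs must be generated by the unique involution $t^{g\pm1}$, which is orientation-preserving for odd $g$. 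You run the count upstairs for the whole group at once and replace ``a fixed point must exist but cannot'' by ``no fixed point exists, so $\chi$ of the quotient is a half-integer''; the trade-off is that you need the full Burnside formula and the careful location of $O$, but you avoid the orbifold quotient and Lemma \ref{lem:cyclic} entirely.

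The gap you flag at $g=3$ for $\Z_{2g-2}=\Z_4$ is real, and you should know that no ``separate, finer argument'' will close it: the statement is actually false there. Since $2g-2=4=g+1$ when $g=3$, the paper's own Example \ref{general}(1) supplies the action: the composition $t=r\tau_4$ of the reflection $r$ with the $\pi/2$-rotation $\tau_4$ preserves the dipole graph $\Gamma^1_3$ (two vertices on the axis, four edges), satisfies $t^2=\tau_4^2\neq 1$ and $t^4=1$, hence generates $\Z_4$ with $t$ orientation-reversing, and so $\Z_4$ acts orientation-reversingly on $(\R^3,\partial N(\Gamma^1_3))=(\R^3,\Sigma_3)$. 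In your notation this is exactly the escape you predicted: here $g-1\leq g/2+1$, Proposition \ref{pro:cyclic action} no longer forces case (iii), the quotient $\Sigma_3/\Z_2$ is a torus with four branch points, $c=4$, and your formula gives $\chi=\frac{(1-3)+1\cdot 2}{4}=0$, an integer. The paper's own proof of the $\Z_{2g-2}$ half has the same blind spot (it asserts the quotient is a closed genus-$2$ surface, which is only forced for $g\geq 5$), and the slip is harmless downstream, since that clause is only invoked in Proposition \ref{pro:faithful action}, where admitting $n=\a-1=2$ for $\a=3$ still yields $2n=4\leq\a+1$. The correct fix for your write-up is therefore not a finer argument but a restatement: prove the $\Z_{2g-2}$ assertion for odd $g\geq 5$ only.
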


\begin{proof}
Suppose that $\Z_{2g+2}$ acts on $(\mathbb{R}^3,\Sigma_g)$ orientation-reversingly. Let $t$ be a generator of $\Z_{2g+2}$, then $t^2$ generates $\Z_{g+1}$. Let $X=\Sigma_g/\Z_{g+1}$, then by Proposition \ref{pro:cyclic action}, $X$ is a sphere with $4$ singular points, and it bounds a 3-orbifold $\mathcal{O}_1$ in $\R^3/\Z_{g+1}$ such that $|\mathcal{O}_1|$ is a 3-ball and the singular set of $\mathcal{O}_1$ consists of two arcs. By Lemma \ref{lem:cyclic}, $t$ induces an orientation-reversing $\Z_2$-action on $\mathcal{O}_1$, which can not have fixed point in each singular arc of $\mathcal{O}_1$. Then it has no singular fixed point. Since $|\mathcal{O}_1|$ has Euler characteristic $1$, by Lemma \ref{fixed point} there exists a regular fixed point $x\in\mathcal{O}_1$. Let $x'$ be a preimage of $x$ in $\R^3$, then the stable subgroup $St(x')\subset\Z_{2g+2}$ is isomorphic to $\Z_2$ and its generator $t^{g+1}$ is orientation-reversing. This is a contradiction since $g$ is odd. For $\Z_{2g-2}$, by Proposition \ref{pro:cyclic action}, the corresponding $X$ is a closed surface of genus $2$, and it bounds a 3-manifold $M$ in $\R^3/\Z_{g-1}$ such that $M$ has Euler characteristic $-1$. Then the induced $\Z_2$-action has a fixed point $x\in M$ by Lemma \ref{fixed point}, and we can get a contradiction as above since $g$ is odd.
\end{proof}

\begin{proof}[Proof of Theorem \ref{CE}]
(1) By Example \ref{general}(1)(i) $\Z_{g+1}$ acts on $(\mathbb{R}^3,\partial N(\Gamma^1_g))$ orientation-preservingly. Then by Proposition \ref{pro:cyclic action}, $CE^o_g=g+1$.

(2) By Example \ref{general}(1)(iii) and Example \ref{general}(ii), when $g$ is even, $\Z_{2g+2}$ acts on $(\mathbb{R}^3,\partial N(\Gamma^1_g))$ orientation-reversingly; when $g$ is odd, $\Z_{2g}$ acts on $(\mathbb{R}^3,\partial N(\Gamma^2_g))$ orientation-reversingly. Then by Theorem \ref{CE}(1), the extendable action reaching $CE_g$ must be orientation-reversing. Hence by Lemma \ref{index 2} and Proposition \ref{pro:noncyclic}, $CE_g$ is $2g+2$ for even $g$ and $2g$ for odd $g$.
\end{proof}

\begin{proof}[Proof of Theorem \ref{E}]
(1) By Example \ref{general}(1)(ii) and Example \ref{special}(1)(iii), we only need to show that if $G$ orientation-preservingly acts on $(\mathbb{R}^3,\Sigma_g)$, then $|G|$ is not bigger than the value given in Theorem \ref{E}. Since the $G$-action can be conjugated into $SO(3)$, by Theorem \ref{CE}(1) we can assume that $G$ is one of $D_n$, $A_4$, $S_4$ and $A_5$. Here $D_n$ is the dihedral group of order $2n$. The singular set of the corresponding orbifold $\mathcal{O}=\mathbb{R}^3/G$ is shown in Figure \ref{fig:singular set}. The underlying space of $\mathcal{O}$ is always $\R^3$, and the indices are the index numbers of the branch lines.

\begin{figure}[h]
\centerline{\scalebox{0.8}{\includegraphics{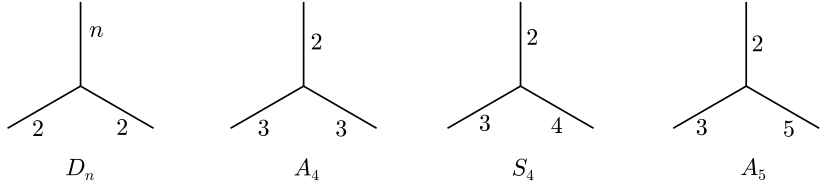}}}
\caption{Singular set of the orbifolds $\R^3/G$}\label{fig:singular set}
\end{figure}
The $2$-orbifold $X=\Sigma_g/G$ must be separating in $\mathcal{O}$, hence $X$ bounds a 3-orbifold $\mathcal{O}_1$ in $\mathcal{O}$ such that $|\mathcal{O}_1|$ is compact. Suppose that $X$ has $k$ singular points and $|X|$ has genus $g'$. Then by the Riemann-Hurwitz formula we have
$$2-2g=|G|(2-2g'-\sum^k_{i=1}(1-\frac{1}{q_i})).$$
We can assume that $|G|>2g-2$. Note that $q_i>1$ for each $i$, then
$$2g'+\frac{k}{2}\leq 2g'+\sum^k_{i=1}(1-\frac{1}{q_i})=2+\frac{2g-2}{|G|}<3.$$
Hence $4g'+k\leq 5$. If $g'=1$, then $k\leq 1$. Since $X$ can not intersect the singular set of $\mathcal{O}$ only once, we have $k=0$. Since $g>1$, this contradicts the Riemann-Hurwitz formula. Hence $g'=0$, $k\leq 5$ and $|\mathcal{O}_1|$ is a 3-ball.

If $\mathcal{O}_1$ contains the vertex of the singular set of $\mathcal{O}$, then each half line of the singular set of $\mathcal{O}$ must intersect $X$ odd times. Choose a intersection in each half line and assume that they have indices $q_1$, $q_2$ and $q_3$. Then for each case of $D_n$, $A_4$, $S_4$ and $A_5$ we have
$$\sum^3_{i=1}(1-\frac{1}{q_i})=2(1-\frac{1}{|G|}).$$
(One can check this directly, for example, for $A_4$, $\{q_1, q_2, q_3\}=\{2,3,3\}$ and $|A_4|=12$.)
The Riemann-Hurwitz formula now becomes
$$2g=|G|\sum^k_{i=4}(1-\frac{1}{q_i}).$$
Hence $k=5$. Then since $q_i>1$ for each $i$, we have $|G|\leq 2g$.

If $\mathcal{O}_1$ does not contain the vertex of the singular set of $\mathcal{O}$, then each half line of the singular set of $\mathcal{O}$ must intersect $X$ even times. Then we can assume that $k=2k'$ and $q_i=q_{i+k'}$ for $1\leq i\leq k'$. The Riemann-Hurwitz formula now becomes
$$2-2g=|G|(2-2\sum^{k'}_{i=1}(1-\frac{1}{q_i})).$$
Since $g>1$ and $q_i>1$ for each $i$, we have $k'=2$ and $k=4$. Hence $X$ is a sphere with $4$ singular points, and $\mathcal{O}_1$ is as Figure \ref{fig:3ball2arc}.

\begin{figure}[h]
\centerline{\scalebox{1}{\includegraphics{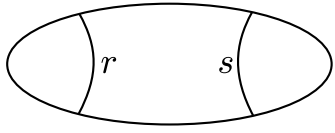}}}
\caption{3-ball with two singular arcs}\label{fig:3ball2arc}
\end{figure}
\noindent Here $r$ and $s$ are the indices of the singular arcs in $\mathcal{O}_1$. We can assume that $1<r\leq s$. The Riemann-Hurwitz formula can be rewritten as
$$g-1=|G|(1-\frac{1}{r}-\frac{1}{s}).$$
Since we assume that $|G|>2g-2$, we have
$$\frac{1}{r}+\frac{1}{s}>\frac{1}{2}.$$
If $G=D_n$, then $G$ contains $\Z_n$ as an index $2$ subgroup. By Theorem \ref{CE}(1), $n\leq g+1$. Hence $|G|\leq 2g+2$. The equality holds when $n=g+1$ and $(r,s)=(2,n)$. If $G=A_4$, then $|G|=12$ and $(r,s)$ is one of $(2,3)$ and $(3,3)$. Hence $g$ is $3$ or $5$. If $G=S_4$, then $|G|=24$ and $(r,s)$ is one of $(2,3)$, $(2,4)$, $(3,3)$ and $(3,4)$. Hence $g$ is one of $5$, $7$, $9$ and $11$. If $G=A_5$, then $|G|=60$ and $(r,s)$ is one of $(2,3)$, $(2,5)$, $(3,3)$ and $(3,5)$. Hence $g$ is one of $11$, $19$, $21$ and $29$. Then we get the results except the case of $g=9$.

If $G=S_4$ and $(r,s)=(3,3)$, then $\pi_1(X)$ is generated by elements of order $3$. Hence its image in $\pi_1(\mathcal{O})\cong G$ is contained in $A_4$. By Lemma \ref{LemOrbConps} the preimage of $X$ in $\R^3$ is not connected. However by our definition the preimage of $X$ is $\Sigma_g$ which is connected. The contradiction means that the above case of $g=9$ does not happen.

(2) By Example \ref{general}(1)(iv), Example \ref{special}(2), Theorem \ref{E}(1) and Lemma \ref{index 2}, we need only to consider the case of $g=21$. If there is a $G$-action on $(\mathbb{R}^3,\Sigma_{21})$ such that $|G|>88=4(21+1)$, then $|G^o|>2(21+1)$. By the proof of Theorem \ref{E}(1), $G^o=A_5$ and $\Sigma_{21}/G^o$ is a sphere with $4$ singular points of index $3$. Since the $G$-action can be conjugated into $O(3)$, it induces a reflection on $\R^3/G^o$, and the reflection plane contains the singular set of $\R^3/G^o$. The reflection plane cuts $\Sigma_{21}/G^o$ into two homeomorphic connected bordered surfaces. Hence the intersection of $\Sigma_{21}/G^o$ and the refection plane is one circle. Then the image of $\pi_1(\Sigma_{21}/G^o)$ in $\pi_1(\R^3/G^o)$ is isomorphic to $\Z_3$. By Lemma \ref{LemOrbConps} the preimage of $\Sigma_{21}/G^o$ in $\R^3$ is not connected, and we get a contradiction.
\end{proof}


\section{Bordered surfaces in $\mathbb{R}^3$}\label{Sec:bordered surface}

In this section, we first construct some extendable actions which will be used to realize $CEA^o_\a$, $CEA_\a$, $EA^o_\a$ and $EA_\a$. The examples mainly come from Example \ref{general} and Example \ref{special}. Then we give the proofs of Theorem \ref{CEA} and Theorem \ref{EA}.

\begin{example}\label{ex:bordered surface}
(1) For the graph in Example \ref{general}(1), we can replace each vertex with a disk and replace each edge with a band to get a bordered surface $\Sigma_{0,g+1}$ such that each of the group actions constructed in Example \ref{general}(i-iv) keeps $\Sigma_{0,g+1}$ invariant.

(2) For each of the graphs in Example \ref{special}(1)(2), we can replace each vertex with a disk and replace each edge with a band to get a bordered surface $\Sigma_{0,g+1}$, where $g$ is one of $3$, $5$, $7$, $11$, $19$, such that the corresponding group action in Example \ref{special}(1)(2) keeps $\Sigma_{0,g+1}$ invariant.

(3) For the graph in Example \ref{special}(3), we can replace each vertex with a disk and replace each edge with a band to get a bordered surface $\Sigma$ such that the group action in Example \ref{special}(3) keeps $\Sigma$ invariant.

Note that the replacement in (1-3) above does not change the fundamental groups. Since the genus of the graph equals the rank of its fundamental group, the algebraic genus of each surface in (1-3) equals the genus of the corresponding graph.
\end{example}

\begin{proposition}\label{le}
For each $\a>1$, we have $CEA^o_{\a}\le CE^o_{\a}$, $CEA_{\a}\le CE_{\a}$, $EA^o_{\a}\le E^o_{\a}$, $EA_{\a}\le E_{\a}$.
\end{proposition}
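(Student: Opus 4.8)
The plan is to convert, for each admissible type of action, a bordered surface carrying an extendable $G$-action into a \emph{closed} surface carrying an extendable $G$-action of the same order, by taking the boundary of a $G$-invariant regular neighbourhood. The crucial point is that this operation sends a bordered surface of algebraic genus $\a$ to the closed surface $\Sigma_\a$, while preserving every feature of the action recorded by the four quantities $CEA^o_\a$, $CEA_\a$, $EA^o_\a$, $EA_\a$.

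Concretely, suppose $G$ acts on $(\R^3,\Sigma)$ with $\Sigma$ a compact bordered surface and $\a(\Sigma)=\a$. Since the $\R^3$-action can be conjugated into $O(3)$, I may assume it is orthogonal; fixing a $G$-invariant Riemannian metric, the closed $\varepsilon$-neighbourhood $N(\Sigma)$ for small $\varepsilon$ is a $G$-invariant regular neighbourhood of $\Sigma$. Hence $G$ preserves $\partial N(\Sigma)$, and restricting the action gives a $G$-action on $(\R^3,\partial N(\Sigma))$ whose $\R^3$-part is literally the original map (so it is still faithful and still conjugate into $O(3)$).

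Next I identify $\partial N(\Sigma)$. As $\Sigma$ is connected with non-empty boundary, it collapses onto a spine graph $\Gamma$ with first Betti number $\a$, so $N(\Sigma)$ is simultaneously a regular neighbourhood of $\Gamma$ in $\R^3$ and therefore an orientable handlebody of genus $\a$ (exactly as in the discussion preceding Example \ref{general}); consequently $\partial N(\Sigma)$ is the connected closed orientable surface $\Sigma_\a$. Equivalently, from $\chi(\partial N(\Sigma))=2\chi(N(\Sigma))=2\chi(\Sigma)$ one computes the genus directly: for $\Sigma=\Sigma_{g,b}$ it is $2g+b-1=\a$, and for $\Sigma=\Sigma^-_{g,b}$ it is $g+b-1=\a$, using $\a(\Sigma_{g,b})=2g+b-1$ and $\a(\Sigma^-_{g,b})=g+b-1$. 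In particular $\a>1$ guarantees $\Sigma_\a$ has genus $>1$, so the closed-surface maxima are defined.

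The restricted action on $(\R^3,\Sigma_\a)$ is then an extendable finite group action of order $|G|$; it is cyclic exactly when $G$ is cyclic, and orientation-preserving on $\R^3$ exactly when the original action is, since in both constructions the map on $\R^3$ is unchanged. Taking the maximum over all bordered surfaces of algebraic genus $\a$ on the left, and comparing with the corresponding maximum over all embeddings of $\Sigma_\a$ on the right, yields all four inequalities at once. The only steps requiring genuine care are the equivariance of the neighbourhood, which is immediate from orthogonality, and the genus identity $\partial N(\Sigma)\cong\Sigma_\a$; the latter is the crux of the argument and rests on the numerical coincidence between the algebraic genus of a bordered surface and the geometric genus of the boundary of its regular neighbourhood. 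Everything else transfers formally.
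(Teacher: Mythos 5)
Your proposal is correct and follows essentially the same route as the paper: pass to a $G$-equivariant regular neighbourhood $N(\Sigma)$, observe that it is a genus-$\a$ handlebody so that $\partial N(\Sigma)\cong\Sigma_\a$, and restrict the (unchanged) $G$-action on $\R^3$ to this closed surface. The extra details you supply (the Euler characteristic computation and the preservation of cyclicity and orientation behaviour) are correct and merely make explicit what the paper leaves implicit.
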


\begin{proof}
We only show that $EA_{\a}\le E_{\a}$. The proofs of the others are similar.

For any bordered surface $\Sigma\subset\R^3$ with $\a(\Sigma)=\a$ and any $G$-action on $(\R^3,\Sigma)$, the group $G$ also acts on $(\R^3,\partial N(\Sigma))$, where $N(\Sigma)$ is an equivariant regular neighborhood of $\Sigma$ such that $N(\Sigma)$ is a handlebody of genus $\a$ and $\partial N(\Sigma)$ is a smoothly embedded surface in $\R^3$. Then $\partial N(\Sigma)$ is homeomorphic to $\Sigma_\a$. Hence $|G|\le E_{\a}$, and $EA_{\a}\le E_{\a}$.
\end{proof}

\begin{remark}
By comparing Theorem \ref{CE}, \ref{E}, \ref{CEA} and \ref{EA}, the inequalities in Proposition \ref{le} are all sharp.
\end{remark}

\begin{proof}[Proof of Theorem \ref{CEA}]
(1) By Example \ref{ex:bordered surface}(1), there exists a $\Z_{\a+1}$-action on $(\R^3,\Sigma_{0,\a+1})$, and $\a(\Sigma_{0,\a+1})=\a$. By Proposition \ref{le} and Theorem \ref{CE}(1), we have $CEA^o_\a\leq CE^o_\a=\a+1$. So $CEA^o_\a=\a+1$. Following we need to determine all $\Sigma$ realizing $CEA^o_\a$.

Suppose that $\Z_{\a+1}$ acts on $(\R^3,\Sigma)$ with $\a(\Sigma)=\a$. Let $X=\Sigma/\Z_{\a+1}$, and let $N(\Sigma)$ be the regular neighborhood of $\Sigma$ as in the proof of Proposition \ref{le}, then $N(\Sigma)/\Z_{\a+1}$ is a regular neighborhood of $X$, denoted by $N(X)$. By Proposition \ref{pro:cyclic action}, $N(X)$ must be a 3-ball with $2$ singular arcs of index $\a+1$. Since $|N(X)|=N(|X|)$, $|X|$ must be a disk. Then since $\a+1>2$, the boundary points of $|X|$ are regular in $X$, and $X$ is a disk with $2$ singular points of index $\a+1$. By Lemma \ref{LemOrbOri}, $\Sigma$ is orientable, and we can assume that $\Sigma=\Sigma_{g,b}$.

Given the singular line in $\R^3/\Z_{\a+1}$ an orientation, there are two ways that $X$ intersects the singular line, as Figure \ref{fig:intersectionXandSline}. Let $\Z_{\a+1}=\langle t\mid t^{\a+1}\rangle$, and let $i: X\hookrightarrow\R^3/\Z_{\a+1}$ be the inclusion map. Consider the induced homomorphism $i_*: \pi_1(\partial X)\rightarrow\pi_1(\R^3/\Z_{\a+1})=\Z_{\a+1}.$
\begin{figure}[h]
\centerline{\scalebox{1}{\includegraphics{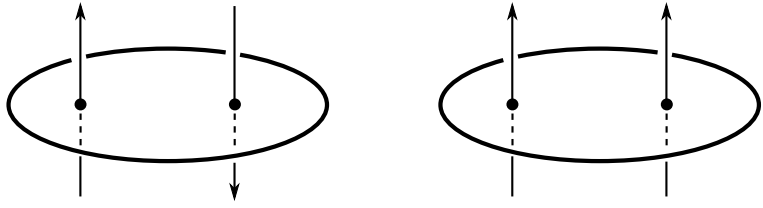}}}
\caption{Intersection of $X$ and the singular line}\label{fig:intersectionXandSline}
\end{figure}

In the left picture, the singular line goes through the two singular points in opposite directions. Then $i_*(\pi_1(\partial X))$ is trivial in $\Z_{\a+1}$. So
$$[\pi_1(\R^3/\Z_{\a+1}):i_*(\pi_1(\partial X))]=\a+1.$$
By Lemma \ref{LemOrbConps}, the preimage of $\partial X$ in $\R^3$ has $\a+1$ connected components. Hence $b=\a+1$. Since $\a(\Sigma_{g,b})=2g-1+b$, we have $g=0$ and $\Sigma=\Sigma_{0,\a+1}$.

In the right picture, the singular line goes through the two singular points in the same direction. Then $i_*(\pi_1(\partial X))$ in $\Z_{\a+1}$ is generated by $t^2$. So
$$[\pi_1(\R^3/\Z_{\a+1}):i_*(\pi_1(\partial X))]=((-1)^{\a+1}+3)/2.$$
By Lemma \ref{LemOrbConps}, the preimage of $\partial X$ in $\R^3$ is connected when $\a$ is even and has $2$ connected components when $\a$ is odd. Since $\a(\Sigma_{g,b})=2g-1+b$, we have $\Sigma=\Sigma_{\a/2,1}$ for even $\a$ and $\Sigma=\Sigma_{(\a-1)/2,2}$ for odd $\a$. An intuitive view of the surfaces is shown as Figure \ref{fig:ECaction on BSurface} (for $\a=2$).
\begin{figure}[h]
\centerline{\scalebox{1}{\includegraphics{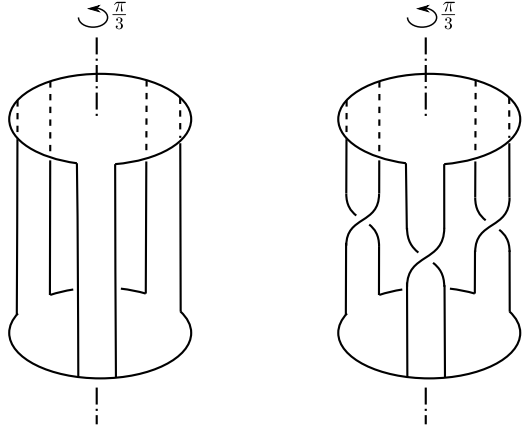}}}
\caption{Extendable cyclic action on bordered surfaces}\label{fig:ECaction on BSurface}
\end{figure}

(2) When $\a$ is even, by Example \ref{ex:bordered surface}(1) there exists a $\Z_{2\a+2}$-action on $(\R^3, \Sigma_{0,\a+1})$, and $\a(\Sigma_{0,\a+1})=\a$. By Proposition \ref{le} and Theorem \ref{CE}(2), we have
$CEA_\a\leq CE_\a=2\a+2$. So $CEA_\a$ is $2\a+2$ for even $\a$. Following we need to determine all $\Sigma$ realizing $CEA_\a$ with even $\a$.

Suppose that $\Z_{2\a+2}$ acts on $(\mathbb{R}^3,\Sigma)$ with $\a(\Sigma)=\a$. Let $t$ be a generator of $\Z_{2\a+2}$, then $t^2$ generates $\Z_{\a+1}$. Let $X=\Sigma/\Z_{\a+1}$. Then by the proof of Theorem \ref{CEA}(1), $X$ is a disk with two singular points. By Lemma \ref{lem:cyclic}, the $\Z_2$-action on $X$ induced by $t$ can not fix both the singular points of $X$. Hence it has no singular fixed points in $X$, and there exists regular fixed points in $X$. Then the $\Z_2$-action must be a reflection on $\R^3/\Z_{\a+1}$ and $X$. Since the $\Z_2$-action changes the orientation of the singular line in $\R^3/\Z_{\a+1}$, the singular line goes through the two singular points of $X$ in opposite directions, see Figure \ref{fig:reflection on X}. So by the proof of Theorem \ref{CEA}(1), $\Sigma=\Sigma_{0,\a+1}$.
\begin{figure}[h]
\centerline{\scalebox{1}{\includegraphics{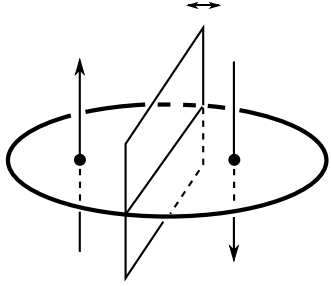}}}
\caption{Reflection on $X=\Sigma/\Z_{\a+1}$}\label{fig:reflection on X}
\end{figure}

When $\a$ is odd, by Proposition \ref{le} and Theorem \ref{CE}(2),
$CEA_\a\leq 2\a$. On the other hand, there exists a $\Z_{2\a}$-action on $(\R^3,\Sigma_{0,\a+1})$ indicated by the right picture of Figure \ref{fig:non-faithful action} (for $\a=3$). The surface lies on a plane, and the action is generated by a $2\pi/\a$-rotation and the reflection about the plane. Then $CEA_\a$ is $2\a$ for odd $\a$. Following we need to determine all $\Sigma$ realizing $CEA_\a$ with odd $\a$.
\begin{figure}[h]
\centerline{\scalebox{1}{\includegraphics{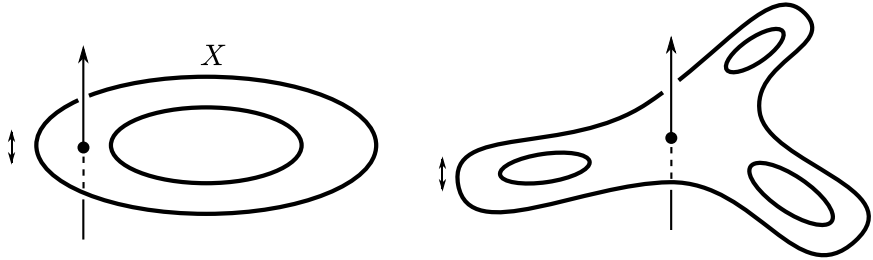}}}
\caption{Non-faithful extendable cyclic action on $\Sigma_{0,\alpha+1}$}\label{fig:non-faithful action}
\end{figure}

Suppose that $\Z_{2\a}$ acts on $(\mathbb{R}^3,\Sigma)$ with $\a(\Sigma)=\a$. Let $t$ be a generator of $\Z_{2\a}$, then $t^2$ generates $\Z_{\a}$. Let $X=\Sigma/\Z_{\a}$, and let $N(X)$ be the regular neighborhood of $X$ as in the proof of Theorem \ref{CEA}(1). By Proposition \ref{pro:cyclic action}, $\partial N(X)$ is a torus with two singular points of index $\a$, since $\a\geq 3$. Then $N(X)\subset\R^3/\Z_\a$ must be a solid torus with one singular arc, and $|X|$ must be an annulus or a M\"obius band. Since $\a\geq 3$, the boundary points of $|X|$ are regular in $X$, and $X$ has exactly $1$ singular point $w$ of index $\a$. Then the involution induced by $t$ fixes this singular point $w$; since $\chi(X- w)=-1$,  by Lemma \ref{fixed point} the involution must have other  regular fixed points in $X$. By Lemma \ref{lem:cyclic}, the $\Z_2$-action on $\R^3/\Z_{\a}$ is a reflection, and the reflection plane contains the singular point of $X$. Then $X$ can not intersect the reflection plane transversely. Hence $X$ lies on the reflection plane, and $|X|$ is an annulus, as the left picture of Figure \ref{fig:non-faithful action}. By Lemma \ref{LemOrbOri}, $\Sigma$ is orientable, and we can assume that $\Sigma=\Sigma_{g,b}$.

In this case $\partial X$ has two components. The fundamental group of one component is mapped to a generator of $\Z_\a$, and the fundamental group of the other component is mapped to the identity element of $\Z_\a$. By Lemma \ref{LemOrbConps}, the preimages of the two components in $\R^3$ have $1$ and $\a$ components respectively. Hence $b=\a+1$ and $\Sigma=\Sigma_{0,\a+1}$.

Note that in this case the bordered surface lies on a plane, and the action on the surface must be non-faithful.
\end{proof}

\begin{proposition}\label{pro:faithful action}
If we require that the actions on surfaces are faithful in the definition of $CEA_\a$, then $CEA_\a$ is $\a+1$ for odd $\a$.
\end{proposition}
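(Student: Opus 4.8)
The plan is to establish matching bounds $CEA_\a\ge\a+1$ and $CEA_\a\le\a+1$ for the faithful version, with $\a$ odd throughout.

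For the lower bound I would take the orientation-preserving $\Z_{\a+1}$-action on $(\R^3,\Sigma_{0,\a+1})$ supplied by Example~\ref{ex:bordered surface}(1), where $\a(\Sigma_{0,\a+1})=\a$. This action is a genuine $2\pi/(\a+1)$-rotation of $\R^3$, and a nontrivial power of a rotation fixes only its axis; it can never fix a $2$-dimensional surface pointwise. Hence the action is faithful on the surface, so the faithful $CEA_\a$ is at least $\a+1$.

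For the upper bound, let $\Z_n=\langle t\rangle$ act faithfully on $(\R^3,\Sigma)$ with $\a(\Sigma)=\a$; the goal is $n\le\a+1$. If the action is orientation-preserving on $\R^3$, Theorem~\ref{CEA}(1) gives $n\le CEA^o_\a=\a+1$ at once, so assume it is orientation-reversing. Replacing $\Sigma$ by the boundary $\Sigma_\a=\partial N(\Sigma)$ of an equivariant handlebody neighborhood (as in Proposition~\ref{le}) yields an orientation-reversing $\Z_n$-action on $(\R^3,\Sigma_\a)$. By Lemma~\ref{index 2} the orientation-preserving elements form an index-$2$ subgroup, so $n=2m$ and $\langle t^2\rangle\cong\Z_m$ acts orientation-preservingly on $(\R^3,\Sigma_\a)$. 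Applying Proposition~\ref{pro:cyclic action} to this $\Z_m$-action on $\Sigma_\a$ (i.e.\ with $g=\a$) then forces $m\in\{\a+1,\a,\a-1\}$ or $m\le \a/2+1$.

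The remaining work is to dispatch these cases. When $m\le \a/2+1$ we get $n=2m\le\a+2$; since $n$ is even while $\a+2$ is odd (as $\a$ is odd), in fact $n\le\a+1$. When $m=\a+1$ or $m=\a-1$, the full group $\Z_{2\a+2}$ or $\Z_{2\a-2}$ would act orientation-reversingly on $(\R^3,\Sigma_\a)$ with $\a$ odd, which is exactly what Proposition~\ref{pro:noncyclic} forbids. This leaves $m=\a$, i.e.\ $n=2\a$: here I return to the original bordered surface $\Sigma$ and apply the structural analysis in the proof of Theorem~\ref{CEA}(2), which identifies $\Sigma$ with a planar $\Sigma_{0,\a+1}$ on which $t^{\a}$ is the reflection in that plane, hence fixes $\Sigma$ pointwise; this makes the action non-faithful on $\Sigma$, contradicting the hypothesis. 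Therefore a faithful orientation-reversing action also has order at most $\a+1$, and combining with the orientation-preserving case and the lower bound gives faithful $CEA_\a=\a+1$.

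The step I expect to be the crux is the case $m=\a$ (order $2\a$): I must know not merely that the \emph{extremal} representative of Theorem~\ref{CEA}(2) is non-faithful, but that \emph{every} $\Z_{2\a}$-action on $(\R^3,\Sigma)$ with $\a(\Sigma)=\a$ odd is forced onto a plane with $t^{\a}$ a reflection. This is precisely the content of the argument in the proof of Theorem~\ref{CEA}(2), so the obstacle is really one of correctly reusing that rigidity rather than re-deriving it. A secondary point to handle with care is the parity bookkeeping in the case $m\le\a/2+1$, where the oddness of $\a$ is exactly what upgrades $n\le\a+2$ to $n\le\a+1$.
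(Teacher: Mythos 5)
Your proposal is correct and follows essentially the same route as the paper: reduce to the orientation-reversing case via Theorem \ref{CEA}(1), pass to $\partial N(\Sigma)\cong\Sigma_\a$, split the order of the orientation-preserving index-$2$ subgroup into the cases of Proposition \ref{pro:cyclic action}, kill $\a+1$ and $\a-1$ by Proposition \ref{pro:noncyclic}, kill $\a$ by the planarity/non-faithfulness rigidity from the proof of Theorem \ref{CEA}(2), and use the parity of $\a$ to push $2m\le\a+2$ down to $\le\a+1$. The only (harmless) addition is your explicit check that the realizing rotation action is faithful, which the paper leaves implicit.
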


\begin{proof}
By Theorem \ref{CEA}(1), we only need to consider orientation-reversing group actions. Suppose that $\Z_{2n}$ acts on $(\R^3,\Sigma)$ with $\a(\Sigma)=\a$ orientation-reversingly. Let $N(\Sigma)$ be the regular neighborhood of $\Sigma$ as in the proof of Proposition \ref{le}, then $\Z_{2n}$ acts on $(\R^3,\partial N(\Sigma))$, and $\Z_n\subset\Z_{2n}$ acts on $(\R^3,\partial N(\Sigma))$ orientation-preservingly. By Proposition \ref{pro:noncyclic}, $n$ can not be $\a+1$ and $\a-1$. By the proof of Theorem \ref{CEA}(2), $n$ can not be $\a$, since we require that the actions on surfaces are faithful. Then by proposition \ref{pro:cyclic action}, $n\leq \a/2+1$. Since $\a$ is odd, $n\leq(\a+1)/2$ and $2n\leq\a+1$.
\end{proof}

\begin{proof}[Proof of Theorem \ref{EA}]
(1) By Example \ref{ex:bordered surface}, Proposition \ref{le} and Theorem \ref{E}, we have $EA^o_\a=E^o_\a$. Hence we have the orders in the table. Following we need to determine all $\Sigma$ realizing $EA^o_\a$.

Suppose that $G$ acts on $(\R^3,\Sigma)$ with $\a(\Sigma)=\a$, and $|G|=EA^o_\a$.  Let $X=\Sigma/G$, and let $N(X)$ be the regular neighborhood of $X$ as in the proof of Theorem \ref{CEA}(1). By the proof of Theorem \ref{E}, $N(X)$ must be a 3-ball with $2$ singular arcs of indices $r$ and $s$, where $1<r\leq s$. Since $|N(X)|=N(|X|)$, $|X|$ must be a disk. The possible cases of $G$ and $(r,s)$ are listed below:\\
(i) $\a>1$: $G=D_{\a+1}$ and $(r,s)=(2,\a+1)$;\\
(ii) $\a=3$: $G=A_4$ and $(r,s)=(2,3)$;\\
(iii) $\a=5$: $G=S_4$ and $(r,s)=(2,3)$;\\
(iv) $\a=7$: $G=S_4$ and $(r,s)=(2,4)$;\\
(v) $\a=11$: $G=A_5$ and $(r,s)=(2,3)$;\\
(vi) $\a=19$: $G=A_5$ and $(r,s)=(2,5)$;\\
(vii) $\a=21$: $G=A_5$ and $(r,s)=(3,3)$;\\
(viii) $\a=29$: $G=A_5$ and $(r,s)=(3,5)$.

Then there are two possibilities of $X$ as in Figure \ref{fig:two possibilities}.

Case (a): the boundary points of $|X|$ are regular in $X$, and $X$ is a disk with $2$ singular points of indices $r$ and $s$. By Lemma \ref{LemOrbOri}, $\Sigma$ is orientable, and we can assume that $\Sigma=\Sigma_{g,b}$.

Case (b):    $\partial |X|=\gamma_1\cup\gamma_2$, $\gamma_1$ is
the real boundary,  and $\gamma_2$ is the reflection boundary.
$X$ is a disk with  a singular arc $\gamma_2$ of index $2$  on $\partial |X|$ and a singular point of index $s$ in the interior of $X$.
Note  $\partial X=\gamma_1$, the bold arc  in the right of Figure \ref{fig:two possibilities}.
In this case,  $\Sigma$ can be orientable as well as non-orientable, we assume that $\Sigma=\Sigma_{g,b}$ or $\Sigma=\Sigma^-_{g,b}$.

\begin{figure}[h]
\centerline{\scalebox{1}{\includegraphics{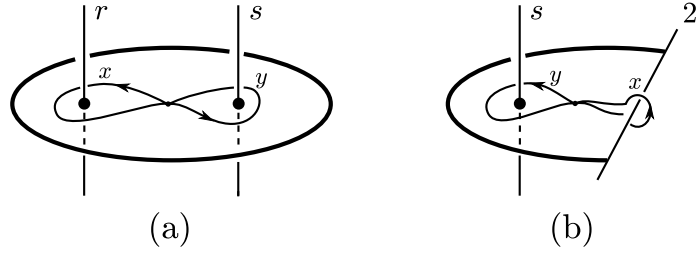}}}
\caption{Two possibilities of $X$}\label{fig:two possibilities}
\end{figure}

Let $i: X\hookrightarrow\R^3/G$ be the inclusion map. Since the preimage of $X$ in $\R^3$ is $\Sigma$, which is connected, by Lemma \ref{LemOrbConps}, $i_*$ is surjective. Note that $\pi_1(X)$ is isomorphic to the free product of $\Z_r$ and $\Z_s$, which correspond to the two singular points in case (a) and correspond to the reflection boundary arc and the singular point in case (b). Let $u$ be a generator of $\Z_r$, let $v$ be a generator of $\Z_s$, and let $x=i_*(u)$, $y=i_*(v)$. Then $x$ has order $r$, $y$ has order $s$, and $\{x,y\}$ generates the group $G$.

In case (a), $i_*(\pi_1(\partial X))$ is generated by $xy$.
In case (b), let $N(\partial X)=N(\gamma_1)$ be the regular neighborhood of $\partial X$ in $\R^3/G$.
Then $N(\partial X)$ is a 3-ball ($|N(\partial X)|=B^3$) wiht two index $2$ lines in it,
 and it is easy to see
that
$i_*(\pi_1(\partial X))=i_*(\pi_1(N(\partial X)))$ is generated by $\{x,y^{-1}xy\}$. Hence by Lemma \ref{LemOrbConps}, $b=[G:\langle xy\rangle]$ in case (a), and $b=[G:\langle x,y^{-1}xy\rangle]$ in case (b). Note that $\Sigma$ is always orientable in case (a), in each case of (i-viii) we will determine whether $\Sigma$ is orientable in case (b), then by $\a(\Sigma_{g,b})=2g-1+b$ and $\a(\Sigma^-_{g,b})=g-1+b$, we can get the genus $g$.

Below we identify $\Sigma$ case by case. Except for the cases of (vi) and (viii), all the surfaces are those in Example \ref{ex:bordered surface} or can be constructed from those in Example \ref{ex:bordered surface} by replacing each band by a half-twisted band.

(i) Note that $D_{\a+1}=\langle a,b \mid a^2, b^{\a+1}, (ab)^2\rangle$. We can assume that $x=a$ and $y=b$.

Case (a): $b=[G:\langle ab\rangle]=\a+1$. Hence $\Sigma=\Sigma_{0,\a+1}$.

Case (b): if $\a$ is even, then $b=[G:\langle a,b^{-1}ab\rangle]=1$; if $\a$ is odd, then $b=[G:\langle a,b^{-1}ab\rangle]=2$. Since $D_{\a+1}$ contains $\Z_{\a+1}$ as an index $2$ subgroup, $\R^3/\Z_{\a+1}$ is a $2$-sheet regular orbifold covering space of $\R^3/D_{\a+1}$, and in $\R^3/\Z_{\a+1}$ there is a $2$-sheet regular orbifold covering space of $X$, denoted by $X'$. Then $X'$ is a disk with $2$ singular points of index $\a+1$. By Lemma \ref{LemOrbOri}, $\Sigma$ is orientable. Hence $\Sigma=\Sigma_{\a/2,1}$ for even $\a$ and $\Sigma=\Sigma_{(\a-1)/2,2}$ for odd $\a$.

An intuitive view of the surfaces is as Figure \ref{fig:ECaction on BSurface}.

(ii) By Lemma \ref{FACTS}(1), we can assume that $\{x,y\}=\{(12)(34),(123)\}$.

Case (a): $b=[A_4:\langle(134)\rangle]=4$. Hence $\Sigma=\Sigma_{0,4}$.

Case (b): $b=[A_4:\langle(12)(34),(14)(23)\rangle]=3$. If $\Sigma$ is orientable, then $\a=2g-1+3$ is even, which is a contradiction. Hence $\Sigma$ is non-orientable, and $\Sigma=\Sigma^-_{1,3}$.

An intuitive view of the surfaces can be seen in Figure \ref{fig:g3nonori}(see also Figure
3 of \cite{CC}).

\begin{figure}[h]
\centerline{\scalebox{0.7}{\includegraphics{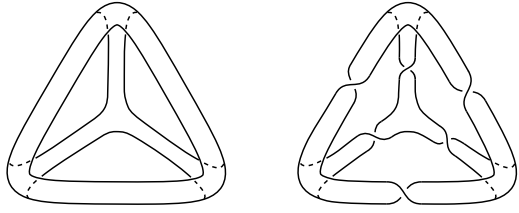}}}
\caption{$\Sigma_{0,4}$ and $\Sigma^-_{1,3}$}\label{fig:g3nonori}
\end{figure}

(iii) By Lemma \ref{FACTS}(2), we can assume that $\{x,y\}=\{(12),(134)\}$.

Case (a): $b=[S_4:\langle(1234)\rangle]=6$. Hence $\Sigma=\Sigma_{0,6}$.

Case (b): $b=[S_4:\langle(12),(23)\rangle]=4$. Consider the $2$-sheet covering space of $X$ in $\mathbb{R}^3/A_4$, denoted by $X'$, as Figure \ref{fig:double cover}. Then $X'$ is a disk with $2$ singular points of index $3$. By Lemma \ref{LemOrbOri}, $\Sigma$ is orientable. Hence $\Sigma=\Sigma_{1,4}$.
\begin{figure}[h]
\centerline{\scalebox{1.2}{\includegraphics{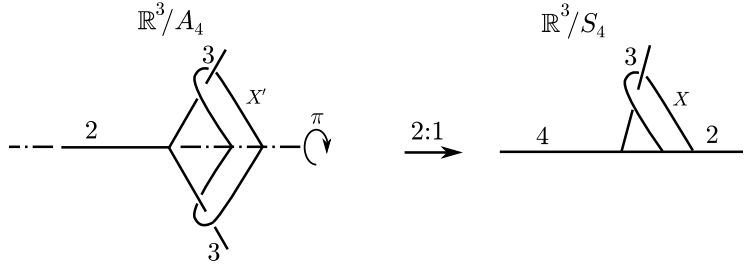}}}
\caption{2-sheet covering with $|X'|$ a disk}\label{fig:double cover}
\end{figure}

(iv) By Lemma \ref{FACTS}(3), we can assume that $\{x,y\}=\{(12),(1234)\}$.

Case (a): $b=[S_4:\langle(134)\rangle]=8$. Hence $\Sigma=\Sigma_{0,8}$.

Case (b): $b=[S_4:\langle(12),(23)\rangle]=4$. Consider the $2$-sheet covering space of $X$ in $\R^3/A_4$, denoted by $X'$, as Figure \ref{fig:double}. If we view $|X'|$ as a bordered surface in $|\R^3/A_4|$, then by Lemma \ref{LemOrbConps}, $|X'|$ has one boundary component. Hence $X'$ is a M\"obius band with one singular point of index $2$ in $\R^3/A_4$. Since $A_4$ has no index $2$ subgroups, by Lemma \ref{lem:nonori}, $\Sigma$ is non-orientable. Hence $\Sigma=\Sigma^-_{4,4}$.
\begin{figure}[h]
\centerline{\scalebox{1.2}{\includegraphics{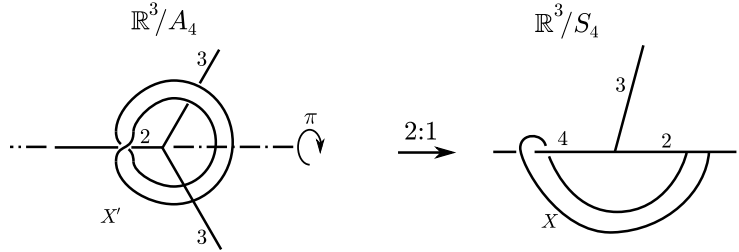}}}
\caption{2-sheet covering with $|X'|$ a M\"obius band}\label{fig:double}
\end{figure}

(v) By Lemma \ref{FACTS}(4), we can assume that $\{x,y\}=\{(12)(34),(135)\}$.

Case (a): $b=[A_5:\langle(12345)\rangle]=12$. Hence $\Sigma=\Sigma_{0,12}$.

Case (b): $b=[A_5:\langle(12)(34),(23)(45)\rangle]=6$. Since $A_5$ has no index $2$ subgroups, by Lemma \ref{lem:nonori}, $\Sigma$ is non-oriented. Hence $\Sigma=\Sigma^-_{6,6}$.

(vi) By Lemma \ref{FACTS}(5), we can assume that one of the following two cases holds: $\{x,y\}=\{(12)(34),(12345)\}$, $\{x,y\}=\{(13)(24),(12345)\}$.

Case (a): in the first case $b=[A_5:\langle(135)\rangle]=20$, and $\Sigma=\Sigma_{0,20}$; in the second case, $b=[A_5:\langle(14325)\rangle]=12$, and $\Sigma_{4,12}$.

Case (b): in the first case $b=[A_5:\langle(12)(34),(23)(45)\rangle]=6$; in the second case $b=[A_5:\langle(13)(24),(24)(35)\rangle]=10$. Since $A_5$ has no index $2$ subgroups, by Lemma \ref{lem:nonori}, $\Sigma$ is non-orientable. Hence $\Sigma=\Sigma^-_{14,6}$ or $\Sigma^-_{10,10}$.

An intuitive view of the surfaces is as following: $\Sigma=\Sigma_{0,20}$ is obtained by replacing vertices and edges of the icosahedron by disks and bands, as in Example \ref{ex:bordered surface}. Then $\Sigma=\Sigma^-_{14,6}$ can be obtained by replacing each band of $\Sigma_{0,20}$ by a half-twisted band. The other two surfaces are somehow non-trivial. An embedding of $\Sigma_{4,12}$ is as the left picture of Figure \ref{fig:gen19surface}. Each boundary of it is a $(5,2)$-torus knot, and the red circle is one of the $12$ boundaries. Then $\Sigma^-_{10,10}$ can be obtained by replacing the bands of $\Sigma_{4,12}$ by half-twisted bands as the right picture of Figure \ref{fig:gen19surface}.
\begin{figure}[h]
\centerline{\scalebox{1}{\includegraphics{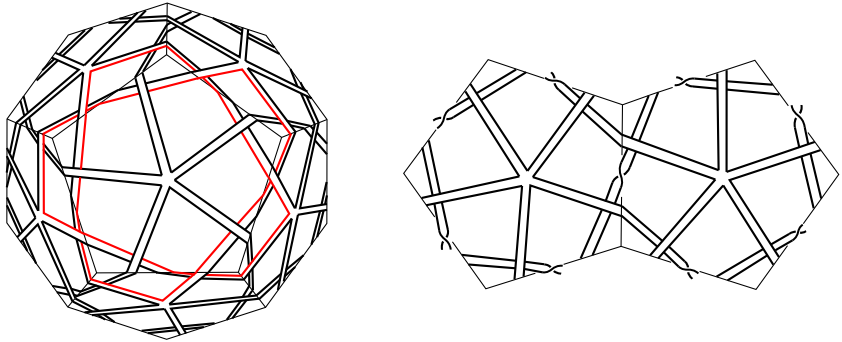}}}
\caption{$\Sigma_{4,12}$ and $\Sigma^-_{10,10}$}\label{fig:gen19surface}
\end{figure}

(vii) By Lemma \ref{FACTS}(6), we can assume that $\{x,y\}=\{(123),(145)\}$. There is only one case. $b=[A_5:\langle(12345)\rangle]=12$. Hence $\Sigma=\Sigma_{5,12}$.

(viii) By Lemma \ref{FACTS}(7), we can assume that one of the following four cases holds:\\
\centerline{$\{x,y\}=\{(123),(12345)\}$, $\{x,y\}=\{(132),(12345)\}$,}\\ \centerline{$\{x,y\}=\{(124),(12345)\}$, $\{x,y\}=\{(142),(12345)\}$.}

There is only one case of $X$, $xy$ is one of $(13245)$, $(145)$, $(13425)$, $(15)(34)$. Then $b$ is one of $12$, $20$, $12$, $30$. Since $\Sigma$ is always orientable, $\Sigma$ is one of $\Sigma_{9,12}$, $\Sigma_{5,20}$, $\Sigma_{9,12}$, $\Sigma_{0,30}$.

An intuitive view of the surfaces is as Figure \ref{fig:g29sur}. $\Sigma_{0,30}$ is as (a); the two $\Sigma_{9,12}$ are as (b) and (c); $\Sigma_{5,20}$ is as (d). (c) differs from (a) by the twists on each face, and (d) differs from (b) by the twists on each face.
\begin{figure}[h]
\centerline{\scalebox{1}{\includegraphics{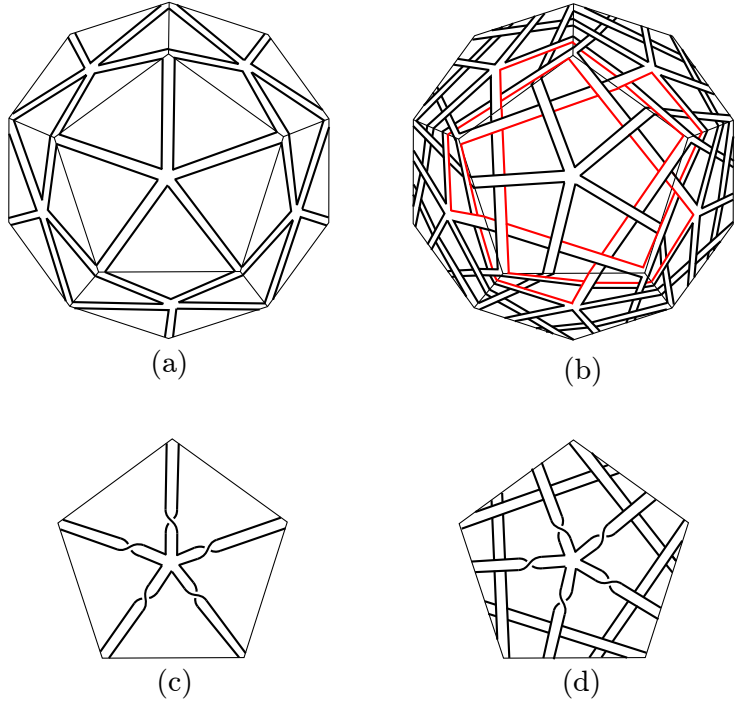}}}
\caption{Surfaces realizing $EA^o_{29}$}\label{fig:g29sur}
\end{figure}

\begin{figure}[h]
\centerline{\scalebox{0.7}{\includegraphics{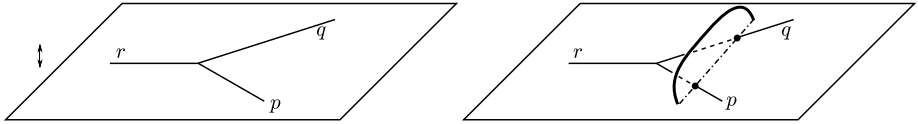}}}
\caption{Reflection plane of $\mathcal{O}$}\label{fig:reflection plane}
\end{figure}
(2) By Example \ref{ex:bordered surface}, Proposition \ref{le} and Theorem \ref{E}, $EA_\a=E_\a$. Hence we have the orders in the table. Following we need to determine all $\Sigma$ realizing $EA_\a$.

Suppose that $G$ acts on $(\R^3, \Sigma)$ realizing the maximum order, then the $G$-action is orientation-reversing. Hence the orientation-preserving elements of $G$ form an index $2$ subgroup $G^o$. Let $\mathcal{O}=\R^3/G^o$ and $X=\Sigma/G^o$. Then the $G$-action induces an orientation-reversing $\Z_2$-action on $\mathcal{O}$. By the proof of Theorem \ref{EA}(1), $|\mathcal{O}|\cong\mathbb{R}^3$, the singular set of $\mathcal{O}$ consists of $3$ half lines, and the induced $\Z_2$-action is a reflection on $|\mathcal{O}|$. The reflection plane $\Pi$ contains the singular set of $\mathcal{O}$, as the left picture of Figure \ref{fig:reflection plane}.

By the proof of Theorem \ref{EA}(1), there are two possibilities: case (a) and case (b). In case (b), the reflection boundary arc of $X$ lies on $\Pi$, hence the whole $X$ lies on $\Pi$. This is a contradiction since $X$ also contains a singular point in the interior of $|X|$. In case (a), $X$ has $2$ singular points which lie on $\Pi$. Since $X$ can not lie on $\Pi$, it must intersect $\Pi$ transversely along an arc passing the two singular points, as the right picture of Figure \ref{fig:reflection plane}. If the two singular points lie on the same singular line, then the preimage of $X$ can not be connected (comparing with the last paragraph of the proof of Theorem \ref{E}). Hence (vii) in the proof of Theorem \ref{EA}(1) does not happen. Then the singular points of $X$ lie on different singular lines, and the order of $xy$ in the proof of Theorem \ref{EA}(1) will be the same as the index of the singular line of $\mathcal{O}$ which does not intersect $X$. Then $\Sigma$ must be a punctured sphere.
\end{proof}


\section{A remark on graphs in $\mathbb{R}^3$}\label{Sec:graph}

Note that for a finite graph, its genus defined at the beginning of Section \ref{Sec:closed surface} coincides with its algebraic genus, defined as the rank of its fundamental group. We can define extendable group actions on graphs like the case of compact bordered surfaces, and define the maximum orders $EG_\a$, $CEG_\a$, $EG^o_\a$, $CEG^o_\a$ similarly. Then we have:

\begin{theorem}
For each $\a>1$, we have $CEG^o_\a=CEA^o_\a$, $EG^o_\a=EA^o_\a$, $CEG_\a=CEA_\a$, $EG_\a=EA_\a$.
\end{theorem}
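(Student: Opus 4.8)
The plan is to prove the two inequalities $EG_\a \le EA_\a$ and $EA_\a \le EG_\a$ (and their obvious restrictions to cyclic groups and to orientation-preserving actions) by passing between a graph and a bordered surface through two essentially inverse constructions: equivariant thickening and taking an equivariant spine. The crucial observation that organizes the whole argument is that in both constructions the action on $\R^3$ is left completely untouched; only the invariant $1$- or $2$-complex is replaced. Consequently both constructions preserve the acting group $G$ (so a cyclic group stays cyclic), preserve whether the $G$-action on $\R^3$ lies in $SO(3)$ or merely in $O(3)$, and preserve the algebraic genus. Hence each of the four asserted equalities reduces to the same pair of inequalities applied within the appropriate subclass, and I would treat the inequalities once and for all.

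First I would establish $EG_\a \le EA_\a$. Given a $G$-action on $(\R^3,\Gamma)$ with $\a(\Gamma)=\a$, I would thicken $\Gamma$ exactly as in Example \ref{ex:bordered surface}, replacing each vertex by a small disk and each edge by a band to produce an embedded bordered surface $\Sigma$. The point requiring attention is that this thickening can be carried out $G$-equivariantly: since $G$ is finite and may be assumed orthogonal, one chooses disks and bands orbit by orbit, making an invariant choice on each orbit representative and propagating by the group, while where an element of $G$ fixes or reverses a vertex or edge one chooses the corresponding disk or band invariant under the (finite, linear) stabilizer. As recorded in Example \ref{ex:bordered surface}, this replacement does not change the fundamental group, so $\a(\Sigma)=\a(\Gamma)=\a$ and $G$ now acts on $(\R^3,\Sigma)$; thus any order realized by a graph of algebraic genus $\a$ is realized by a bordered surface of the same algebraic genus.

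Next I would establish the reverse inequality $EA_\a \le EG_\a$ by producing, from a $G$-action on $(\R^3,\Sigma)$ with $\a(\Sigma)=\a$, a $G$-invariant spine $\Gamma\subset\Sigma$ carrying the full fundamental group. The natural device is the quotient $2$-orbifold $X=\Sigma/G$, a compact $2$-orbifold with (real) boundary $\partial X=\partial\Sigma/G$. A $2$-orbifold with nonempty boundary deformation retracts onto a $1$-dimensional spine $\bar\Gamma$, which I would arrange to pass through the isolated cone points; lifting $\bar\Gamma$ back to $\Sigma$ yields a $G$-invariant graph $\Gamma$, automatically embedded in $\R^3$ since $\Sigma$ is, with $\pi_1(\Gamma)\cong\pi_1(\Sigma)$ and hence $\a(\Gamma)=\a$. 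Then $G$ acts on $(\R^3,\Gamma)$, giving $EA_\a \le EG_\a$, and the two inequalities together yield all four equalities.

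The main obstacle is the equivariant spine in the previous paragraph: one must guarantee that the lift $\Gamma$ is a genuine finite graph carrying all of $\pi_1(\Sigma)$ and that the retraction is $G$-equivariant, which requires care along the singular locus of $X$. Cone points of index $n$ lift to single fixed points with $n$ incident edges and cause no difficulty, but in the orientation-reversing case $X$ may carry mirror boundary and corner reflectors (as in case (b) of the proof of Theorem \ref{EA}), and the spine must then be taken relative to the real boundary $\partial X$ alone rather than the underlying topological boundary $\partial|X|$. An alternative that sidesteps orbifold spines is to construct a $G$-invariant handle decomposition of $\Sigma$ directly, averaging a Morse function over the finite group $G$, and to take the union of the cores of the $0$- and $1$-handles as $\Gamma$; this renders the equivariance manifest, at the cost of checking that the resulting $1$-complex is embedded and realizes the full free group $\pi_1(\Sigma)$. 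Either route completes the argument.
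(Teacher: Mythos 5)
Your reduction to the two inequalities $EG_\a\le EA_\a$ and $EA_\a\le EG_\a$ is sound in outline, and your second direction is genuinely different from the paper's: the paper obtains $EG_\a\ge EA_\a$ simply by observing that every extremal bordered-surface example it constructed is already a disk-and-band thickening of an equivariant graph (Example \ref{ex:bordered surface}), whereas you prove the general inequality for an arbitrary action via an equivariant spine of $\Sigma$. That spine argument is essentially correct (an invariant triangulation, or a spine of the quotient orbifold containing the cone points and the mirror boundary and collapsed from the real boundary, does the job) and has the merit of not depending on the classification of extremal surfaces; the only shaky phrasing there is ``averaging a Morse function,'' since an averaged Morse function need not be Morse --- one needs an invariant triangulation or equivariant Morse theory instead.

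The genuine gap is in the first direction. An equivariant disk-and-band thickening of $\Gamma$ does not always exist: at a vertex $v$ whose stabilizer $H_v$ acts irreducibly on $\R^3$ (e.g.\ $H_v\cong A_4$, $S_4$ or $A_5$), there is no $H_v$-invariant disk near $v$ at all. Such a disk, being compact and contractible, would contain an $H_v$-fixed point, necessarily $v$ itself, and its tangent plane there would be an $H_v$-invariant $2$-plane, which does not exist when $H_v$ is irreducible. Graphs with such vertices do occur --- take the cone over the twelve vertices of the icosahedron together with the icosahedron's edges; the central vertex has stabilizer the full icosahedral group --- so your argument for $EG_\a\le EA_\a$ does not cover all graph actions. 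The paper avoids this entirely by passing to the three-dimensional equivariant regular neighborhood $N(\Gamma)$, which always exists, and letting $G$ act on the closed surface $\partial N(\Gamma)$ of genus $\a$; this gives $EG_\a\le E_\a$, and one then invokes the equalities $E_\a=EA_\a$, $E^o_\a=EA^o_\a$, $CE_\a=CEA_\a$, $CE^o_\a=CEA^o_\a$ established in the proofs of Theorems \ref{CEA} and \ref{EA}. You should route your upper bound through the closed surface in the same way.
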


\begin{proof}
All the examples we constructed come from the equivariant graphs in $\R^3$. So all these examples also apply for graphs with the same genera.

On the other hand, if a group $G$ acts on $(\mathbb{R}^3, \Gamma)$ for a graph $\Gamma$ of genus $g$, then $G$ also acts on $\partial N(\Gamma)$, as described in the beginning of Section \ref{Sec:closed surface}. This completes the proof.
\end{proof}

\begin{remark}
The maximum order of finite group actions on minimal graphs (i.e, the graphs without free edges) of genus $\a$ is $2^\a \a!$ if $\a>2$ and is 12 if $\a=2$ \cite{WZi}.
\end{remark}

\noindent Chao Wang, Jonsvannsveien 87B, H0201, Trondheim 7050, NORWAY\\
{\it E-mail address:} chao\_{}wang\_{}1987@126.com

\noindent Shicheng Wang, School of Mathematical Sciences, Peking University, Beijing 100871, CHINA\\
{\it E-mail address:} wangsc@math.pku.edu.cn

\noindent Yimu Zhang, Mathematics School, Jilin University, Changchun 130012, CHINA\\
{\it E-mail address:} zym534685421@126.com

\noindent Bruno Zimmermann, Universita degli Studi di Trieste, Trieste 34100, ITALY\\
{\it E-mail address:} zimmer@units.it

\end{document}